\NeedsTeXFormat{LaTeX2e}
\documentclass[oneside,A4,11pt]{amsart}
\usepackage{enumitem}
\usepackage{graphicx}
\usepackage[margin=1.4in]{geometry} 

\usepackage{amssymb}
\usepackage{amsthm}

\usepackage{amsmath}
\usepackage{color}
\usepackage[utf8]{inputenc}

\usepackage{tikz}
\usepackage[all]{xy}
\usepackage[bookmarksnumbered,colorlinks]{hyperref}
\usepackage{dsfont}
\usepackage[normalem]{ulem}
\usepackage{float} 
\def\br#1\er{\textcolor{red}{#1}} 
\hyphenation{Lo-ren-tzian}

\usepackage{autonum}

\makeatletter
\@namedef{subjclassname@2020}{%
	\textup{2020} Mathematics Subject Classification}
\makeatother

\begin{document}
\title{Willmore surfaces and Hopf tori in homogeneous $3$-manifolds}
\author[A.L. Albujer]{Alma L. Albujer} \address{Departamento de
  Matemáticas, Edificio Albert Einstein\hfill\break\indent Universidad
  de Córdoba, Campus de Rabanales,\hfill\break\indent 14071 Córdoba,
  Spain}
 \email{aalbujer@uco.es}

\author[F.R. dos Santos]{F\'abio R. dos Santos} \address{Departamento de Matem\'atica\hfill\break\indent Universidade Federal de Pernambuco \hfill\break\indent 50.740-560, Recife, Pernambuco, Brazil}
 \email{fabio.reis@ufpe.br}

\newtheorem{thm}{Theorem}[section]
\newtheorem{theorem}{Theorem}[section]
\newtheorem{proposition}[thm]{Proposition} \newtheorem{lemma}[thm]{Lemma}
\newtheorem{corollary}[thm]{Corollary} \newtheorem{conv}[thm]{Convention}
\theoremstyle{definition} \newtheorem{definition}[thm]{Definition}
\newtheorem{notation}[thm]{Notation} \newtheorem{exe}[thm]{Example}
\newtheorem{conjecture}[thm]{Conjecture} \newtheorem{prob}[thm]{Problem}
\newtheorem{remark}[thm]{Remark}
\newtheorem{example}[thm]{Example}
\newcommand{\nablat}{\overline{\nabla}}
\renewcommand{\div}{\mathrm{div}}
\begin{abstract}
	 Some classification results for closed surfaces in Berger spheres are presented. On the one hand, a Willmore functional for isometrically immersed surfaces into an homogeneous space $\mathbb{E}^{3}(\kappa,\tau)$ with isometry group of dimension $4$ is defined and its first variational formula is computed. Then, we characterize Clifford and Hopf tori as the only Willmore surfaces satifying a sharp Simons-type integral inequality. On the other hand, we also obtain some integral inequalities for closed surfaces with constant extrinsic curvature in $\mathbb{E}^3(\kappa,\tau)$, becoming equalities if and only if the surface is a Hopf torus in a Berger sphere. 
\end{abstract}

\keywords{Willmore surface, homogeneous space, constant extrinsic curvature, Clifford torus, Hopf torus.}

\subjclass[2020]{53C42, 53A10, 53C30.}

\maketitle
\usetikzlibrary{matrix}

\section{Introduction}\label{sec:1}

A classical problem in the theory of isometric immersions is to classify immersed surfaces into a space form of constant sectional curvature having either constant mean curvature or constant Gaussian curvature. In this direction, we can highlight the rigidity theorems due to Alexandrov~\cite{Alexandrov:56}, Liebmann~\cite{Liebmann:1899} and Hilbert~\cite{Hilbert:01} on surfaces of constant curvature as the most celebrated results in the theory of surfaces in the Euclidean space $\mathbb{R}^{3}$. For generalizations of these results we quote~\cite{Aledo:05,Montiel:91}. Besides that, we cannot fail to highlight the classical Hopf's theorem~\cite{Hopf:83} which characterizes totally umbilical spheres as the unique topological spheres of constant mean curvature immersed into a $3$-dimensional space form of constant sectional curvature.

A natural generalization of space forms are the so-called {\em homogeneous} spaces. A Riemannian manifold is said to be homogeneous if for any two points $p$ and $q$, there exists an isometry that maps $p$ into $q$. Geometrically, an homogeneous manifold seems the same everywhere. As it is well-known, simply connected $3$-dimensional Riemannian homogeneous spaces are classified. Such manifolds have an isometry group of dimension $6$, $4$ or $3$. When the dimension is $6$, they correspond to space forms. When the dimension is $3$, the manifold has the geometry of the Lie group ${\rm Sol}_{3}$. In the case where the dimension of the isometry group is $4$, such manifold fibers over a two-dimensional space form of constant sectional curvature $\kappa$, $\mathbb{M}^2(\kappa)$, and its fibers are the trajectories of a unit Killing vector field. These last manifolds are usually denoted by $\mathbb{E}^{3}(\kappa,\tau)$, where $\tau$ is the constant bundle curvature of the natural projection $\pi:\mathbb{E}^3(\kappa,\tau)\rightarrow \mathbb{M}^2(\kappa)$ and $\kappa\neq4\tau^{2}$. According to the constants $\kappa$ and $\tau$ we can classify such spaces. When $\tau=0$, $\mathbb{E}^{3}(\kappa,0)=\mathbb{M}^{2}(\kappa)\times\mathbb{R}$ where $\mathbb{M}^{2}(\kappa)$ is the sphere $\mathbb{S}^{2}(\kappa)$ of curvature $\kappa>0$ or the hyperbolic plane $\mathbb{H}^{2}(\kappa)$ of curvature $\kappa<0$. When $\tau\neq0$, $\mathbb{E}^{3}(\kappa,\tau)$ is a Berger sphere $\mathbb{S}^{3}_{b}(\kappa,\tau)$ if $\kappa>0$, a Heisenberg group ${\rm Nil}_{3}(\tau)$ if $\kappa=0$ or the universal cover of $PSL(2,\mathbb{R})$ if $\kappa<0$.

In the last years, the study of surfaces in homogeneous spaces with $4$-di\-men\-sio\-nal isometry group has attracted the attention of many geometers. We can say that this attention is due to the studies of Abresch, Rosenberg and Meeks which made possible great advances in the research in this area~\cite{Abresch:04,Meeks:04,Rosenberg:02}. Indeed, Abresch and Rosenberg~\cite{Abresch:04} discovered an holomorphic quadratic differential for surfaces with constant mean curvature in these spaces and solved the Hopf's theorem for them. Moreover, these spaces are also related to the eight geometries of Thurston~\cite{Thurston:97}. Furthermore, in~\cite{Galvez:08}, G\'alvez, Mart\'inez and Mira considered the study of the classical Bonnet problem for surfaces in the homogeneous $3$-manifolds $\mathbb{E}^3(\kappa,\tau)$. Later on, Rosenberg and Tribuzy showed in~\cite{Tribuzy:12} a rigidity result for a family of complete surfaces in an homogeneous space having the same positive extrinsic curvature and satisfying a certain condition. 

Some years ago, Hu, Lyu and Wang developed in~\cite{Hu:15} a Simons-type integral inequality for immersed minimal closed surfaces into the homogeneous space $\mathbb{E}^{3}(\kappa,\tau)$, the equality being satisfied if and only if the surface has parallel second fundamental form. When the homogeneous space $\mathbb{E}^{3}(\kappa,\tau)$ is the Berger sphere $\mathbb{S}^{3}_{b}(\kappa,\tau)$ $(\kappa\neq4\tau^{2})$, they showed that the equality holds if and only if the surface is the Clifford torus. We recall that the Clifford torus is the only minimal Hopf torus in the Berger sphere. Recently, P\'ampano has considered in~\cite{Pampano:20} a more general setting, where the ambient space is the total space of a Killing submersion. Specifically, he studies surface energies depending on the mean curvature, which extend the classical notion of Willmore energy. Furthermore, the author constructs critical tori for these energy functionals.

Concerning product spaces, even more recently the second author has studied in~\cite{dos Santos:18} immersed complete surfaces into a product space $\mathbb{M}^{2}(\kappa)\times\mathbb{R}$ with nonnegative constant extrinsic curvature. In this setting, he has shown that these surfaces must be either cylinders when $\kappa=-1$, or slices when $\kappa=1$. Our goal is, on the one hand, to present a Willmore functional for immersed closed surfaces into $\mathbb{E}^3(\kappa,\tau)$, to obtain its Euler-Lagrange equation, and as a consequence to present a characterization result for closed Willmore surfaces in $\mathbb{S}^3_b(\kappa,\tau)$ in terms of an integral inequality. On the other hand, we extend the technics developed in~\cite{dos Santos:18} to the study of immersed closed surfaces with constant extrinsic curvature into $\mathbb{E}^{3}(\kappa,\tau)$ $(\tau\neq0)$.

The outline of the paper goes as follows. In Section~\ref{sec:2} we describe some basic facts about surfaces in the homogeneous space $\mathbb{E}^{3}(\kappa,\tau)$ $(\tau\neq0)$ with isometry group of dimension~$4$, introducing some relevant families of surfaces in such homogeneous spaces. Later on, working with the Cheng-Yau's operator, we develop in Section~\ref{sec:3} a Simons-type formula for these surfaces (cf. Proposition~\ref{prop:2.1}), as well as a divergence type formula involving the Cheng-Yau's operator (cf. Lemma~\ref{divergencias:CY}). In Section~\ref{sec:4} we compute the Euler-Lagrange equation for the Willmore functional of an immersed closed surface into an homogeneous space $\mathbb{E}^{3}(\kappa,\tau)$ (cf. Proposition~\ref{lem:1}). As an application, we characterize Clifford and Hopf tori as the only Willmore surfaces satisfying a sharp Simons-type integral inequality (cf. Theorem~\ref{teo:Willmore surfaces}). In the last section, we consider closed surfaces with constant extrinsic curvature and we also obtain integral inequalities, becoming equalities if and only if the surface is a Hopf torus in a Berger sphere $\mathbb{S}_{b}^{3}(\kappa,\tau)$ (cf. Theorems~\ref{teo:1.1} and~\ref{teo:1.2}).

\section{Preliminaries}\label{sec:2}

In this section, we will introduce some basic facts and notations that will appear along the paper.

Let $\kappa$ and $\tau$ be real numbers. The region $\mathcal{D}$ of the Euclidean space $\mathbb{R}^{3}$ given by
\begin{equation}\label{eq:D}
	\mathcal{D}=\left\{
	\begin{array}{ccc}
		\mathbb{R}^{3}, & \mbox{if} & \kappa\geq0 \\
		\mathbb{D}(2/\sqrt{-\kappa})\times\mathbb{R}, & \mbox{if} & \kappa<0
	\end{array}
	\right. 
\end{equation}
and endowed with the homogeneous Riemannian metric
\begin{equation}\label{eq:metric_E}
	\langle\,,\rangle_{R}=\lambda^{2}(dx^{2}+dy^{2})+\left(dz+\tau\lambda(ydx-xdy)\right)^{2},\quad\lambda=\dfrac{1}{1+\frac{\kappa}{4}(x^{2}+y^{2})},
\end{equation}
is the so-called {\it Bianchi-Cartan-Vranceanu space} ($BCV$-space) which is usually denoted by $\mathbb{E}^{3}(\kappa,\tau):=\left(\mathcal{D},\langle\,,\rangle_{R}\right)$.

As it is well-known, there exists a Riemannian submersion $\pi:\mathbb{E}^{3}(\kappa,\tau)\rightarrow \mathbb{M}^{2}(\kappa)$, where $\mathbb{M}^{2}(\kappa)$ is the $2$-dimensional simply connected space form of constant curvature $\kappa$, such that $\pi$ has constant bundle curvature $\tau$ and totally geodesic fibers. Furthermore, $\xi=E_3$ is a unit Killing field on $\mathfrak{X}(\mathbb{E}^{3}(\kappa,\tau))$ which is vertical with respect to $\pi$.

The $BCV$-spaces $\mathbb{E}^{3}(\kappa,\tau)$ are oriented, and then we can define a vectorial product $\wedge$, such that if $\{e_1,e_2\}$ are linearly independent vectors at a point $p$, then $\{e_1,e_2, e_1\wedge e_2\}$ determines an orientation at $p$. Then the properties of $\xi$ imply (see~\cite{Daniel:07}) that for any vector field $X$ on $\mathfrak{X}(\mathbb{E}^{3}(\kappa,\tau))$ the following relation holds
\begin{equation}\label{eq:xi}
	\overline{\nabla}_{X}\xi=\tau(X\wedge\xi),
\end{equation}
$\overline{\nabla}$ being the Levi-Civita connection of $\mathbb{E}^3(\kappa,\tau)$.
Moreover, let us recall that the curvature tensor of $\mathbb{E}^3(\kappa,\tau)$\footnote{We adopt for the $(1,3)$-curvature tensor of the spacetime the following definition (\cite[Chapter 3]{O'Neill:83}), $\overline{R}(X,Y)Z=\overline{\nabla}_{[X,Y]}Z-[\overline\nabla_X,\overline\nabla_Y]Z$.} satisfies, (see~\cite{Daniel:07}),
\begin{equation}\label{eq:curv_E}
	\begin{split}
		\overline{R}(X,Y)Z=&(\kappa-3\tau^{2})(\langle X,Z\rangle Y-\langle Y,Z\rangle X)\\
		&+(\kappa-4\tau^{2})\langle Z,\xi\rangle(\langle Y,\xi\rangle X-\langle X,\xi\rangle Y)\\
		&+(\kappa-4\tau^{2})(\langle Y,Z\rangle\langle X,\xi\rangle-\langle X,Z\rangle\langle Y,\xi\rangle)\xi,
	\end{split}
\end{equation}
where $X,Y,Z\in\mathfrak{X}(\mathbb{E}^3(\kappa,\tau))$.

In what follows, let $\Sigma^{2}$ be an isometrically immersed connected surface which we assume to be orientable and oriented by a globally defined unit normal vector field $N$. Let us denote by $A$ the second fundamental form of the immersion with respect to $N$ and by $\nabla$ the Levi-Civita connection of $\Sigma^{2}$. Then, the Gauss and Weingarten formulae are given by
\begin{equation}\label{eq:Gauss formula}
	\overline{\nabla}_XY=\nabla_XY+\langle A(X),Y\rangle N
\end{equation}
and
\begin{equation}\label{eq:Weingarten}
	A(X)=-\overline{\nabla}_XN,
\end{equation}
for every tangent vector fields $X,Y\in\mathfrak X(\Sigma)$.

Furthermore, we can consider a particular function naturally attached to such a surface $\Sigma^{2}$, namely, $C=\langle N,\xi\rangle$. Let us observe that $C$ measures the cosinus of the angle determined by the vector fields $N$ and $\xi$. A direct computation shows that the projection of the vector field $\xi$ on $\mathfrak{X}(\Sigma)$ is given by
\begin{equation}\label{eq:2.3}
	T=\xi^{\top}\!\!=\xi-CN,
\end{equation}
where $(\,\cdot)^{\top}$ denotes the tangential component of a vector field in $\mathfrak X(\mathbb{E}^3(\kappa,\tau))$ along $\Sigma^{2}$. Thus, we get
\begin{equation}\label{eq:2.4}
	|T|^2=1-C^2.
\end{equation}
Besides, from~\eqref{eq:xi},~\eqref{eq:2.3} and the Gauss and Weingarten formulae we easily obtain the integrability equations, 
\begin{equation}\label{eq:2.5}
	\nabla_{X}T=C (A-\tau J)(X)\quad\mbox{and}\quad\nabla C =-(A+\tau J) (T),
\end{equation}
where $J$ denotes the (oriented) rotation of angle $\pi/2$ on $T\Sigma$ given by $J(X)=N\wedge X$. In particular, \begin{equation}\label{eq:2.6}
	\langle J(X),J(Y)\rangle=\langle X,Y\rangle\quad\mbox{and}\quad J^{2}(X)=-X,
\end{equation}
for every $X,Y\in\mathfrak{X}(\Sigma)$. Therefore, from the first equation in~\eqref{eq:2.5} it easily follows that
\begin{equation}\label{eq:divT}
	{\rm div}(T)=2CH,
\end{equation}
where ${\rm div}$ denotes the divergence operator on $\Sigma^2$ and $H$ stands for the mean curvature of $\Sigma^2$, defined by $H=\frac{1}{2}{\rm tr}(A)$. Furthermore, it is immediate to check that 
\begin{equation}\label{eq:2.9}
	4H^{2}=|A|^{2}+2K_{e},
\end{equation}
where $|A|^2={\rm tr}(A^2)$ and $K_{e}={\rm det}(A)$ denotes the {\em extrinsic curvature} of $\Sigma^{2}$.

As it is well-known, the fundamental equations of $\Sigma^2$ are the {\em Gauss equation}
\begin{equation}\label{eq:2.10}
	\begin{split}
		R(X,Y)Z=&(\kappa-3\tau^{2})(\langle X,Z\rangle Y-\langle Y,Z\rangle X)\\
		&+(\kappa-4\tau^{2})\langle Z,T\rangle(\langle Y, T\rangle X-\langle X, T\rangle Y)\\
		&+(\kappa-4\tau^{2})(\langle Y,Z\rangle\langle X, T\rangle-\langle X,Z\rangle\langle Y, T\rangle) T\\
		&+\langle A(X),Z\rangle A(Y)-\,\langle A(Y),Z\rangle A(X),
	\end{split}
\end{equation}
where $R$ denote the curvature tensor of $\Sigma^{2}$ and $X,Y,Z\in\mathfrak{X}(\Sigma)$, and the {\em Codazzi equation}
\begin{equation}\label{eq:2.11}
	\nabla A(X,Y)-\nabla A(Y,X)=(\kappa-4\tau^{2})C(\langle X, T\rangle Y-\langle Y, T\rangle X),
\end{equation}
where $\nabla A:\mathfrak{X}(\Sigma)\times\mathfrak{X}(\Sigma)\longrightarrow\mathfrak{X}(\Sigma)$ denotes the covariant differential of $A$,
\begin{equation}\label{eq:2.14}
	\nabla A(X,Y)=(\nabla_{Y}A)(X)=\nabla_{Y}A(X)-A(\nabla_{Y}X),\quad\mbox{for all}\quad X,Y\in\mathfrak{X}(\Sigma).
\end{equation}
From the Gauss equation~\eqref{eq:2.10}, jointly with~\eqref{eq:2.4} and~\eqref{eq:2.9} it holds
\begin{equation}\label{eq:2.12}
	2K=2\tau^{2}+2(\kappa-4\tau^{2})C ^{2}+4H^{2}-|A|^{2}=2\tau^{2}+2(\kappa-4\tau^{2})C ^{2}+2K_{e}.
\end{equation}

Let us recall now some classical surfaces in $\mathbb{E}^{3}(\kappa,\tau)$ which can be constructed in the following way. Given any regular curve $\alpha$ in $\mathbb{M}^{2}(\kappa)$, $\pi^{-1}(\alpha)$ is an isometrically immersed surface into $\mathbb{E}^3(\kappa,\tau)$ which is usually known as a {\em Hopf cylinder}. Hopf cylinders are flat surfaces, which have $\xi$ as a parallel tangent vector field and they are characterized by $C=0$. Furthermore, these cylinders satisfy 
\begin{equation}\label{eq:Clifford}
	H=k_{g}/2,\quad K=0,\quad K_{e}=-\tau^{2}\quad\mbox{and}\quad |\Phi|^2=2H^2+2\tau^2,
\end{equation}
where $k_g$ is the geodesic curvature of $\alpha$. 

Moreover, if $\alpha$ is a closed curve and the Riemannian submersion $\pi$ has circular fibers, which happens just in the case where $\mathbb{E}^3(\kappa,\tau)$ is a Berger sphere $\mathbb{S}^{3}_{b}(\kappa,\tau)$, then $\pi^{-1}(\alpha)$ is a flat torus which is also called a {\em Hopf torus}. 

Let us remember at this point that the Berger sphere $\mathbb{S}_{b}^{3}(\kappa,\tau)$ is isometric to the usual sphere $\mathbb{S}^{3}=\{(z,w)\in\mathbb{C}^{2}\,;\,|z|^{2}+|w|^{2}=1\}$ endowed with the metric
\begin{equation}\label{eq:metric_Sb}
	\langle X,Y\rangle=\dfrac{4}{\kappa}\left(\langle X,Y\rangle_{\mathbb{S}^{3}}+\dfrac{1}{\kappa}\left(4\tau^{2}-\kappa\right)\langle X,V\rangle_{\mathbb{S}^{3}}\langle Y,V\rangle_{\mathbb{S}^{3}}\right),
\end{equation}
where $\langle\,,\rangle_{\mathbb{S}^{3}}$ stands for the usual metric on the sphere, $V_{(z,w)}=J(z,w)=(iz,iw)$ for each $(z,w)\in\mathbb{S}^{3}$ and $\kappa,\tau$ are real numbers with $\kappa>0$ and $\tau\neq0$. We note that if $\kappa=4\tau^{2}$ then $\mathbb{S}_{b}^{3}(\kappa,\tau)$ is, up to homotheties, the round sphere. The Hopf fibration $\pi:\mathbb{S}_{b}^{3}(\kappa,\tau)\rightarrow\mathbb{S}^{2}(\kappa)$, defined by
\begin{equation}\label{eq:Hopf_fib}
	\pi(z,w)=\dfrac{1}{\sqrt{\kappa}}\left(z\overline{w},\dfrac{1}{2}\left(|z|^{2}-|w|^{2}\right)\right),
\end{equation}
is a Riemannian submersion  whose fibers are geodesics. The vertical unit Killing vector field is given by $\xi=\dfrac{\kappa}{4\tau}V$. A particular Hopf torus in $\mathbb{S}_b^3(\kappa,\tau)$ is the {\em Clifford torus} given by $$\{(z,w)\in\mathbb{S}^{3}_{b}(\kappa,\tau)\,;\,|z|^{2}=|w|^{2}=1/2\}.$$
It is well-known that the Clifford torus is the only minimal Hopf torus in any Berger sphere (see for instance~\cite{Torralbo:12.1}).

Let us finish this section by recalling a classification result for parallel surfaces in $\mathbb{E}^3(\kappa,\tau)$, proved by Belkhelfa, Dillen and Inoguchi in~\cite{Dillen:02}. From now on, we will understand by a parallel surface a surface with parallel second fundamental form.

\begin{lemma}\label{lem:4.1}
	\cite[Theorem 8.2]{Dillen:02} Let $\Sigma^{2}$ be an isometrically immersed parallel surface into the homogeneous space $\mathbb{E}^{3}(\kappa,\tau)$, $\kappa-4\tau^2\neq 0$. Then,
	\begin{enumerate}
		\item if $\tau\neq 0$ $\Sigma^{2}$ is a piece of a Hopf cylinder over a Riemannian circle in $\mathbb{M}^2(\kappa)$, that is, over a closed curve in $\mathbb{M}^2(\kappa)$ with constant geodesic curvature.
		\item if $\tau =0$ $\Sigma^2$ is either a piece of a slice in $\mathbb{M}^2(\kappa)\times\mathbb{R}$ or of a Hopf cylinder over a Riemannian circle in $\mathbb{M}^2(\kappa)$.
	\end{enumerate}
\end{lemma}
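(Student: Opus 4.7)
The plan is to derive the classification from the Codazzi equation~\eqref{eq:2.11} together with the integrability equations~\eqref{eq:2.5}, then analyze two mutually exclusive cases forced by the assumption $\nabla A=0$.

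First I would specialize~\eqref{eq:2.11} to $\nabla A \equiv 0$. This yields
\[
(\kappa-4\tau^{2})\,C\,\bigl(\langle X,T\rangle Y-\langle Y,T\rangle X\bigr)=0
\]
for all $X,Y\in\mathfrak{X}(\Sigma)$. Choosing $X=T$ and $Y$ linearly independent from $T$, and using $\kappa-4\tau^{2}\neq 0$, I get $C\,|T|^{2}=0$ pointwise. Combined with $|T|^{2}=1-C^{2}$ from~\eqref{eq:2.4}, this means that at every point either $C=0$ or $T=0$. The two closed sets $\{C=0\}$ and $\{T=0\}=\{C^{2}=1\}$ are disjoint (the second forces $C\neq 0$) and cover $\Sigma^{2}$, so by connectedness exactly one alternative holds globally.

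In the case $T\equiv 0$ (equivalently $C\equiv\pm 1$), the first equation in~\eqref{eq:2.5} gives $0=\nabla_{X}T=C(A-\tau J)(X)$, hence $A=\tau J$. Since $A$ is symmetric and $J$ is skew-symmetric with respect to the induced metric, this forces $\tau=0$ and $A=0$. Then $\xi$ is a unit normal along a totally geodesic surface of $\mathbb{M}^{2}(\kappa)\times\mathbb{R}$, which identifies $\Sigma^{2}$ as a piece of a slice. This simultaneously shows that this alternative cannot occur when $\tau\neq 0$, giving part~(1) for free.

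In the remaining case $C\equiv 0$, the vector field $\xi$ is tangent to $\Sigma^{2}$ (with $T=\xi$), so locally $\Sigma^{2}=\pi^{-1}(\alpha)$ for a regular curve $\alpha$ in $\mathbb{M}^{2}(\kappa)$, i.e., a Hopf cylinder. From~\eqref{eq:2.5} I read off $\nabla_{X}T=0$ and $A(T)=-\tau J(T)$; since $J$ is parallel on the oriented surface $\Sigma^{2}$, the orthonormal frame $\{T,JT\}$ is parallel. Writing $A$ in this frame and using $\mathrm{tr}\,A=2H=k_{g}$ from~\eqref{eq:Clifford}, the matrix of $A$ becomes
\[
A=\begin{pmatrix}0 & -\tau\\ -\tau & k_{g}\end{pmatrix}.
\]
The parallel condition $\nabla_{X}A(Y)=A(\nabla_{X}Y)$, combined with parallelism of the frame, collapses to $X(k_{g})=0$ for every $X\in\mathfrak{X}(\Sigma)$. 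Thus $k_{g}$ is constant along $\alpha$, so (when closed) $\alpha$ is a Riemannian circle in $\mathbb{M}^{2}(\kappa)$ and $\Sigma^{2}$ is a piece of the corresponding Hopf cylinder, proving the remaining parts of~(1) and~(2).

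The only delicate point I anticipate is the bookkeeping in the last case: one has to be careful that the parallelism of $J$ on a $2$-dimensional oriented surface is used correctly, and that the off-diagonal entry $-\tau$ is automatically annihilated by the parallel-frame argument, so that the parallel condition really reduces to the single scalar equation $dk_{g}=0$. Everything else is an essentially algebraic manipulation of the structure equations already displayed in Section~\ref{sec:2}.
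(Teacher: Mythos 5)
Your argument is correct, but note that the paper does not prove this lemma at all: it is imported verbatim as \cite[Theorem 8.2]{Dillen:02}, so there is no internal proof to compare against. What you have supplied is a self-contained derivation from the structure equations of Section~\ref{sec:2}, and each step checks out: Codazzi with $\nabla A=0$ forces $C|T|^{2}=0$; the two closed sets $\{C=0\}$ and $\{C^{2}=1\}$ are disjoint and cover the (connected) surface, so exactly one alternative holds globally; $T\equiv0$ gives $A=\tau J$, whence $\tau=0$ and $A=0$ from the symmetric/skew-symmetric clash, identifying a piece of a slice and simultaneously disposing of this alternative when $\tau\neq0$; and $C\equiv0$ makes $\{T,J(T)\}$ a parallel unit frame in which $A$ has entries $0,-\tau,-\tau,k_{g}$, so $\nabla A=0$ collapses to $dk_{g}=0$ exactly as you anticipate. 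Two minor points. First, the step ``$C\equiv 0$ implies $\Sigma^{2}$ is locally $\pi^{-1}(\alpha)$'' deserves a sentence: since $\xi$ is everywhere tangent, $\Sigma^{2}$ is locally invariant under the flow of $\xi$, hence locally a union of fibre segments projecting onto a regular curve. Second, the phrase ``Riemannian circle'' (a \emph{closed} curve of constant geodesic curvature) is stronger than what any local argument can deliver: your proof yields constancy of $k_{g}$, but in $\mathbb{H}^{2}(\kappa)$ such a curve need not close up (geodesics, horocycles, equidistant curves). This imprecision is inherited from the quoted statement rather than introduced by you, and since the conclusion is only ``a piece of'' a Hopf cylinder it does not affect how the lemma is applied later in the paper, where closedness of $\Sigma^{2}$ forces the base curve to be closed anyway.
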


\section{A Simons-type formula for the Cheng-Yau operator in $\mathbb{E}^{3}(\kappa,\tau)$}\label{sec:3}

In consideration of the foregoing we are going to compute the Laplacian of $|A|^{2}$. First and foremost, we recall the following Weitzenb\"{o}ck formula (see for instance~\cite{Nomizu:69})
\begin{equation}\label{eq:2.13}
	\dfrac{1}{2}\Delta|A|^{2}=\dfrac{1}{2}\Delta\langle A,A\rangle=|\nabla A|^{2}+\langle\Delta\,A,A\rangle ,
\end{equation}
where $\Delta A:\mathfrak{X}(\Sigma)\longrightarrow\mathfrak{X}(\Sigma)$ is the rough Laplacian of the second fundamental form, that is,
\begin{equation}\label{eq:2.15}
	\Delta A(X)={\rm tr}\left(\nabla^{2}A(X,\cdot,\cdot)\right)=\sum_{i=1}^{2}\nabla^{2}A(X,e_{i},e_{i}),
\end{equation}
$\{e_{1},e_{2}\}$ being an orthonormal frame on $\mathfrak{X}(\Sigma)$ and $\nabla^{2}A(X,Y,Z)=\left(\nabla_{Z}\nabla A\right)(X,Y)$ for all $X,Y,Z\in\mathfrak{X}(\Sigma)$. In this setting, on the one hand we obtain from the Codazzi equation~\eqref{eq:2.11} and the integrability equations~\eqref{eq:2.5} the following symmetry in the two firsts variables of $\nabla^2A$,
\begin{equation}\label{eq:2.16}
	\begin{split}
		\nabla^{2}A(X,Y,Z)=&\nabla^{2}A(Y,X,Z)-(\kappa-4\tau^{2})\langle(A+\tau J)(T),Z\rangle\left(\langle X, T\rangle Y-\langle Y, T\rangle X\right)\\
		&+(\kappa-4\tau^{2})C^{2}\left(\langle X,(A-\tau J)(Z)\rangle Y-\langle Y,(A-\tau J)(Z)\rangle X\right).
	\end{split}
\end{equation}
On the other hand, it is not difficult to see that
\begin{equation}\label{eq:2.17}
	\nabla^{2}A(X,Y,Z)=\nabla^{2}A(X,Z,Y)+R(Y,Z)A(X)-A(R(Y,Z)X).
\end{equation}

Making $Y=Z=e_{i}$ in~\eqref{eq:2.16} and taking traces, we have
\begin{equation}\label{eq:2.18}
	\begin{split}
		\sum_{i=1}^2\nabla^{2}A(X,e_{i},e_{i})=&\sum_{i=1}^2 \nabla^{2}A(e_{i},X,e_{i})-(\kappa-4\tau^{2})C^{2}\left(2HX-(A+\tau J)(X)\right)\\&-(\kappa-4\tau^{2})\left(\langle X,T\rangle(A+\tau J)(T)-\langle A(T),T\rangle X\right).\\
	\end{split}
\end{equation}
Furthermore, from~\eqref{eq:2.17} it yields
\begin{equation}\label{eq:2.19}
	\nabla^{2}A(e_{i},X,e_{i})=\nabla^{2}A(e_{i},e_{i},X)+R(X,e_{i})A(e_{i})-A(R(X,e_{i})e_{i}).
\end{equation}
Observe now that, using the Gauss equation~\eqref{eq:2.10}, we get
\begin{equation}\label{eq:2.20}
	\begin{split}
		\sum_{i=1}^{2}R(X,e_{i})A(e_{i})=&(\kappa-3\tau^{2})(A(X)-2HX)-|A|^{2}A(X)+A^{3}(X)\\
		&+(\kappa-4\tau^{2})\left(\langle A( T), T\rangle X-\langle X, T\rangle A( T)\right)\\
		&+(\kappa-4\tau^{2})\left(2H\langle X, T\rangle-\langle A( T),X\rangle\right) T
	\end{split}
\end{equation}
and
\begin{equation}\label{eq:2.21}
	\sum_{i=1}^{2}A(R(X,e_{i})e_{i})=-(\kappa-3\tau^{2})A(X)+A^{3}(X)-2HA^{2}(X)+(\kappa-4\tau^{2})| T|^{2}A(X).
\end{equation}
Thus, inserting these two last equalities in~\eqref{eq:2.19},
\begin{equation}\label{eq:2.22}
	\begin{split}
		\sum_{i=1}^2\nabla^{2}A(e_{i},X,e_{i})&=\sum_{i=1}^2\nabla^{2}A(e_{i},e_{i},X)+2(\kappa-3\tau^{2})(A(X)-HX)+2HA^{2}(X)\\
		&+(\kappa-4\tau^{2})\left(\langle A( T), T\rangle X-\langle X, T\rangle A( T)-| T|^{2}A(X)\right)\\
		&+(\kappa-4\tau^{2})\left(2H\langle X, T\rangle-\langle A( T),X\rangle\right) T-|A|^{2}A(X).
	\end{split}
\end{equation}
Observe now that, since the trace commutes with the Levi-Civita connection,
\begin{equation}\label{eq:2.23}
	\sum_{i=1}^{2}\nabla^{2}A(e_{i},e_{i},X)={\rm tr}\left(\nabla_{X}\nabla A\right)=\nabla_{X}({\rm tr}(\nabla A)).
\end{equation}

We claim that 
\begin{equation}\label{eq:traza_nA}
	{\rm tr}(\nabla A)=2\nabla H+C(\kappa-4\tau^{2}) T.
\end{equation} 
Indeed, using the Codazzi equation~\eqref{eq:2.11},
\begin{equation}\label{eq:2.24}
	\begin{split}
		\langle\nabla A(e_{i},e_{i}),X\rangle&=\langle(\nabla_{e_{i}}A)(e_{i}),X\rangle=\langle e_{i},(\nabla_{e_{i}}A)(X)\rangle=\langle e_{i},\nabla A(X,e_{i})\rangle\\
		&=\langle e_{i},\nabla A(e_{i},X)\rangle+(\kappa-4\tau^{2})C (\langle X, T\rangle-\langle X, e_{i}\rangle\langle T,e_{i}\rangle),
	\end{split}
\end{equation}
which implies that
\begin{equation}\label{eq:tr_nablaA}
	\begin{split}
		\langle{\rm tr}(\nabla A),X\rangle
		&=\sum_{i=1}^{2}\left[\langle e_{i},(\nabla_{X}A)(e_{i})\rangle+C(\kappa-4\tau^{2})\left(\langle X, T\rangle-\langle X,e_{i}\rangle\langle T,e_{i}\rangle\right)\right]\\
		&=2\langle\nabla H,X\rangle+(\kappa-4\tau^{2})C\langle X, T\rangle,
	\end{split}
\end{equation}
for all $X\in\mathfrak{X}(\Sigma)$, so the claim is proved. Hence, from~\eqref{eq:traza_nA} and~\eqref{eq:2.5}, it holds
\begin{equation}\label{eq:2.26}
	\nabla_{X}({\rm tr}(\nabla A))=2\nabla_{X}\nabla H-(\kappa-4\tau^{2})\langle(A+\tau J)(T),X\rangle T+(\kappa-4\tau^{2})C^{2}(A-\tau J)(X).
\end{equation}

Therefore, putting~\eqref{eq:2.18}, \eqref{eq:2.22} and~\eqref{eq:2.26} in~\eqref{eq:2.15},  
\begin{equation}\label{eq:2.27}
	\begin{split}
		\Delta A(X)=&2\nabla_{X}\nabla H+2(\kappa-3\tau^{2})(A(X)-HX)-|A|^{2}A(X)+2HA^{2}(X)\\
		&+(\kappa-4\tau^{2})\left(2\langle A( T), T\rangle X-2\langle X, T\rangle A( T)-| T|^{2}A(X)\right)\\
		&+(\kappa-4\tau^{2})\left(2H\langle X, T\rangle-2\langle A( T),X\rangle\right) T+\,2(\kappa-4\tau^{2})C^{2}\left(A(X)-HX\right)\\
		&-\tau(\kappa-4\tau^2)\left(\langle J( T),X\rangle T+\langle X, T\rangle J( T)\right).
	\end{split}
\end{equation}
Consequently,
\begin{equation}\label{eq:2.28}
	\begin{split}
		\langle\Delta A,A\rangle=&2{\rm tr}(A\circ{\rm Hess}\,H)+2(\kappa-3\tau^{2})(|A|^{2}-2H^{2})+2(\kappa-4\tau^{2})C^{2}(|A|^{2}-2H^{2})\\
		&+2(\kappa-4\tau^{2})\left(3H\langle A( T), T\rangle-2\langle A^{2}( T), T\rangle-\tau\langle A( T),J( T)\rangle\right)\\
		&-(\kappa-4\tau^{2})| T|^{2}|A|^{2}-|A|^{4}+2H{\rm tr}(A^{3}).
	\end{split}
\end{equation}
Now, taking into account the characteristic polynomial of $A$, we observe that
\begin{equation}\label{eq:2.29}
	4H\langle A(T),T\rangle-2\langle A^{2}( T), T\rangle=2| T|^{2}K_{e}.
\end{equation}
Besides that, from~\eqref{eq:2.9} it holds
\begin{equation}\label{eq:2.30}
	\begin{split}
		2(\kappa-3\tau^{2})&(|A|^{2}-2H^{2})+(\kappa-4\tau^{2})(1-C^{2})(2K_{e}-|A|^{2})\\
		&=2(\kappa-3\tau^{2})(|A|^{2}-2H^{2})-2(\kappa-4\tau^{2})(1-C^{2})(|A|^{2}-2H^{2})\\
		&=2(|A|^{2}-2H^{2})(\tau^{2}+(\kappa-4\tau^{2})C^{2}).
	\end{split}
\end{equation}
Moreover, it is easy to check that ${\rm tr}(A^3)=3H|A|^2-4H^3$, so again from~\eqref{eq:2.9} we deduce that
\begin{equation}\label{eq:2.31}
	-|A|^{4}+2H{\rm tr}(A^{3})=-|A|^{4}+6H^{2}|A|^{2}-8H^{4}=2(|A|^{2}-2H^{2})K_{e}.
\end{equation}
Hence, taking into account~\eqref{eq:2.12},~\eqref{eq:2.29},~\eqref{eq:2.30} and~\eqref{eq:2.31},~\eqref{eq:2.28} reads
\begin{equation}\label{eq:2.31bis}
	\begin{split}
		\langle\Delta A,A\rangle&=2{\rm tr}(A\circ{\rm Hess}\,H)+2(|A|^{2}-2H^{2})K+2(\kappa-4\tau^{2})C^{2}(|A|^{2}-2H^{2})\\
		&\quad+2(\kappa-4\tau^{2})\left(H\langle A( T), T\rangle-\langle A^{2}( T), T\rangle-\tau\langle A( T),J( T)\rangle\right),
	\end{split}
\end{equation}
so~\eqref{eq:2.13} yields
\begin{equation}\label{eq:2.32}
	\begin{split}
		\dfrac{1}{2}\Delta|A|^{2}=&|\nabla A|^{2}+2{\rm tr}(A\circ{\rm Hess}\,H)\\
		&+2(|A|^{2}-2H^{2})K+2(\kappa-4\tau^{2})C^{2}(|A|^{2}-2H^{2})\\
		&+2(\kappa-4\tau^{2})\left(H\langle A( T), T\rangle-\langle A^{2}( T), T\rangle-\tau\langle A( T),J( T)\rangle\right).
	\end{split}
\end{equation}

\begin{remark}\label{rem:1.1}
	Let us observe that formula~\eqref{eq:2.32} was already obtained in~\cite{Hu:15}. In fact, let us consider a local orthonormal frame $\{e_{1},e_{2}\}$ such that $A(e_{1})=\lambda_{1}e_{1}$ and $A(e_{2})=\lambda_{2}e_{2}$. Moreover, by the definition of $J$, we must have $J(e_{1})=e_{2}$ and $J(e_{2})=-e_{1}$. Taking into account these two facts and writing $ T=\langle T\!\!,e_{1}\rangle e_{1}+\langle T\!\!,e_{2}\rangle e_{2}$, we have
	\begin{equation}\label{eq:remark}
		\langle A( T),J( T)\rangle=(\lambda_{2}-\lambda_{1})\langle T\!\!,e_{1}\rangle\langle T\!\!,e_{2}\rangle,
	\end{equation}
	so we recover~\cite[Lemma 3.1]{Hu:15}. However, we have included the proof for the sake of completeness, and because it represents an alternative reasoning based on tensorial analysis.
\end{remark}

Nevertheless, our aim in this section is to obtain a Simons-type formula for the Cheng-Yau's operator. To this respect, following~\cite{Cheng-Yau:77} we introduce the Cheng-Yau's operator $\square$ acting on any smooth function $u:\Sigma^{2}\rightarrow\mathbb{R}$ given by
\begin{equation}\label{eq:2.33}
	\square u={\rm tr}(P\circ{\rm Hess}\,u),
\end{equation}
where $P$ denote the first Newton transformation of $A$, that is, $P:\mathfrak{X}(\Sigma)\rightarrow \mathfrak{X}(\Sigma)$ is the operator given by
\begin{equation}\label{eq:2.34}
	P=2HI-A,
\end{equation}
which is also a self-adjoint linear operator which commutes with $A$ and satisfies ${\rm tr}(P)=2H$.

Taking $u=2H$, from equation~\eqref{eq:2.9} we obtain the following,
\begin{equation}\label{eq:2.35}
	\begin{split}
		\square(2H)&={\rm tr}(P\circ{\rm Hess}\,(2H))\\
		&=2H\Delta(2H)-2{\rm tr}(A\circ{\rm Hess}\,H)\\
		&=\dfrac{1}{2}\Delta(2H)^{2}-4|\nabla H|^{2}-2{\rm tr}(A\circ{\rm Hess}\,H)\\
		&=\dfrac{1}{2}\Delta|A|^{2}+\Delta K_{e}-4|\nabla H|^{2}-2{\rm tr}(A\circ{\rm Hess}\,H).
	\end{split}
\end{equation}
Inserting~\eqref{eq:2.32} in previous equality, we get
\begin{equation}\label{eq:2.36}
	\begin{split}
		\square(2H)=&\Delta K_{e}+|\nabla A|^{2}-4|\nabla H|^{2}+2(|A|^{2}-2H^2)K+2(\kappa-4\tau^{2})C^{2}\left(|A|^{2}-2H^{2}\right)\\
		&+2(\kappa-4\tau^{2})\left(H\langle A( T), T\rangle-\langle A^{2}( T), T\rangle-\tau\langle A( T),J( T)\rangle\right).
	\end{split}
\end{equation}

For our purpose, it will be more appropriate to deal with the traceless part of $A$, which is given by $\Phi=A-HI$, with $I$ the identity operator on $\mathfrak{X}(\Sigma)$. Then, ${\rm tr}(\Phi)=0$ and
\begin{equation}\label{eq:2.38}
	|\Phi|^{2}=|A|^{2}-2H^{2}\geq0,
\end{equation}
with equality at $p\in\Sigma^{2}$ if and only if $p$ is an umbilical point. In contrast to the case where the ambient is a Riemannian product, it was proved in~\cite{Toubiana:} that there does not exist any totally umbilical surface in $\mathbb{E}^{3}(\kappa,\tau)$ with $\tau\neq0$. 

Now, from the characteristic polynomial of $\Phi$ and identity~\eqref{eq:2.38}, the following equalities hold,
\begin{equation}\label{eq:2.39}
	-2\langle A^{2}(T),T\rangle+2H\langle A(T),T\rangle
	=-|\Phi|^{2}|T|^{2}-2H\langle\Phi(T),T\rangle
\end{equation}
and
\begin{equation}\label{eq:2.40}
	2C^{2}|A|^{2}-4H^{2}C^{2}=2C^{2}|\Phi|^{2}.
\end{equation}
Besides this, equations~\eqref{eq:2.4} and~\eqref{eq:2.12} give us
\begin{equation}\label{eq:2.41}
	2K+(\kappa-4\tau^{2})\left(2C^{2}-|T|^{2}\right)=2K_{e}+5(\kappa-4\tau^{2})C^{2}-\kappa+6\tau^{2}.
\end{equation}
Therefore, inserting these three last equations in~\eqref{eq:2.36}, we have finally shown the following Simons-type formula for the Cheng-Yau's operator.

\begin{proposition}\label{prop:2.1}
	Let $\Sigma^{2}$ be an isometrically immersed surface into an homogeneous space $\mathbb{E}^{3}(\kappa,\tau)$. Then,
	\begin{equation}
		\begin{split}
			\square(2H)&=\Delta K_{e}+|\nabla A|^{2}-4|\nabla H|^{2}+|\Phi|^{2}\left(2K_{e}+(\kappa-4\tau^{2})(5C^{2}-1)+2\tau^{2}\right)\nonumber\\
			&\quad-2(\kappa-4\tau^{2})\left(H\langle\Phi(T),T\rangle+\tau\langle\Phi(T),J(T)\rangle\right).
		\end{split}
	\end{equation}
\end{proposition}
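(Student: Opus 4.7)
The plan is to obtain the formula by combining the standard Simons-type identity for $\frac{1}{2}\Delta|A|^{2}$ with the observation that the Cheng--Yau operator $\square$, applied to $2H$, differs from $\frac{1}{2}\Delta|A|^{2}$ by controllable lower-order terms, once one exploits $4H^{2}=|A|^{2}+2K_{e}$. Concretely, I would start from the Weitzenb\"ock formula $\frac{1}{2}\Delta|A|^{2}=|\nabla A|^{2}+\langle\Delta A,A\rangle$ and compute $\Delta A$ by a two-step trick: first use the Codazzi equation~\eqref{eq:2.11} to move $\nabla^{2}A(X,e_{i},e_{i})$ to $\nabla^{2}A(e_{i},X,e_{i})$ up to curvature correction terms, then use the commutator identity~\eqref{eq:2.17} (which involves the intrinsic curvature $R$) together with the Gauss equation~\eqref{eq:2.10} to further transform it into $\nabla^{2}A(e_{i},e_{i},X)=\nabla_{X}(\operatorname{tr}\nabla A)$. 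The auxiliary identity $\operatorname{tr}(\nabla A)=2\nabla H+C(\kappa-4\tau^{2})T$, derived from Codazzi and the self-adjointness of $A$, lets one write this trace in terms of $\nabla H$ and the geometric data $C,T$.

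Once $\Delta A$ has been collected in the form displayed in~\eqref{eq:2.27}, I would pair it with $A$ and simplify the algebraic terms using (i) the Cayley--Hamilton identity for the $2\times 2$ shape operator, which gives both $\operatorname{tr}(A^{3})=3H|A|^{2}-4H^{3}$ and the pointwise relation $2H\langle A(T),T\rangle-\langle A^{2}(T),T\rangle = K_{e}|T|^{2}$ from~\eqref{eq:2.29}, and (ii) the Gauss-trace formula~\eqref{eq:2.12} to exchange $K$-terms for $K_{e}$ and $C^{2}$ terms. This is exactly the route that produces~\eqref{eq:2.32}.

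The next step is to switch from $\Delta|A|^{2}$ to $\square(2H)$. Since $P=2HI-A$ and $\operatorname{tr}(P\circ\operatorname{Hess}\,2H)=2H\Delta(2H)-2\operatorname{tr}(A\circ\operatorname{Hess}\,H)$, using $4H^{2}=|A|^{2}+2K_{e}$ gives the identity
\begin{equation*}
\square(2H)=\tfrac{1}{2}\Delta|A|^{2}+\Delta K_{e}-4|\nabla H|^{2}-2\operatorname{tr}(A\circ\operatorname{Hess}\,H),
\end{equation*}
and the $\operatorname{tr}(A\circ\operatorname{Hess}\,H)$ cancels against the same term appearing in~\eqref{eq:2.32}. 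Finally, one rewrites everything in the traceless operator $\Phi=A-HI$: the key substitutions are $|A|^{2}-2H^{2}=|\Phi|^{2}$ and, via Cayley--Hamilton again, $H\langle A(T),T\rangle-\langle A^{2}(T),T\rangle=-H\langle\Phi(T),T\rangle-\tfrac{1}{2}|\Phi|^{2}|T|^{2}$, while $|T|^{2}=1-C^{2}$ converts the remaining $|T|^{2}$ terms into the combination $5C^{2}-1$ appearing in the statement.

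The main obstacle is bookkeeping rather than conceptual: equation~\eqref{eq:2.27} contains many $(\kappa-4\tau^{2})$-weighted terms (some involving $J(T)$, from the $\tau J$ piece of the integrability equation $\nabla C=-(A+\tau J)(T)$), and one must be careful that the $\tau\langle A(T),J(T)\rangle$-type terms do \emph{not} cancel — they survive and give the final asymmetric contribution $\tau\langle\Phi(T),J(T)\rangle$. Everything else reduces to polynomial identities in $H,K_{e},|\Phi|^{2},C$ that are forced by Cayley--Hamilton in dimension two.
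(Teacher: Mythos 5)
Your proposal follows essentially the same route as the paper: Weitzenb\"ock formula, commuting covariant derivatives of $A$ via the Codazzi and Gauss equations, the trace identity $\operatorname{tr}(\nabla A)=2\nabla H+C(\kappa-4\tau^{2})T$, the relation $\square(2H)=\tfrac{1}{2}\Delta|A|^{2}+\Delta K_{e}-4|\nabla H|^{2}-2\operatorname{tr}(A\circ\operatorname{Hess}H)$, and the final rewriting in terms of $\Phi$ using Cayley--Hamilton and $|T|^{2}=1-C^{2}$. All the key identities you quote (in particular $2H\langle A(T),T\rangle-\langle A^{2}(T),T\rangle=K_{e}|T|^{2}$ and $H\langle A(T),T\rangle-\langle A^{2}(T),T\rangle=-H\langle\Phi(T),T\rangle-\tfrac{1}{2}|\Phi|^{2}|T|^{2}$) check out and match the paper's equations~\eqref{eq:2.29} and~\eqref{eq:2.39}, so the argument is correct and essentially identical to the one given in Section~\ref{sec:3}.
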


\begin{remark}\label{rem:1.2}
	When $\tau=0$, as it was said in the Introduction, the homogeneous space $\mathbb{E}^{3}(\kappa,\tau)$ is exactly the product space $\mathbb{M}^{2}(\kappa)\times\mathbb{R}$, where $\mathbb{M}^{2}(\kappa)$ is a space form with constant sectional curvature $\kappa$. Thus, Proposition~\ref{prop:2.1} extends~\cite[Proposition 1.2]{dos Santos:18}.
\end{remark}

Let us finish this section by showing a nice divergence formula involving the Cheng-Yau's operator.

\begin{lemma}\label{divergencias:CY}
	Let $\Sigma^2$ be an isometrically immersed surface into an homogoneous space $\mathbb{E}^3(\kappa,\tau)$. Then,
	\begin{equation}\label{eq:div:CY}{\rm div}(P(2\nabla H))=\square(2H)-2C(\kappa-4\tau^{2})T(H).
	\end{equation}
\end{lemma}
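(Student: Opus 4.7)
The plan is to unfold $\div(P(2\nabla H))$ by the Leibniz rule and match the resulting pieces against the definition of the Cheng--Yau operator $\square$. Given a local orthonormal frame $\{e_1,e_2\}$ on $\Sigma^2$, for an arbitrary tangent vector field $X\in\mathfrak X(\Sigma)$ the product rule yields
\begin{equation*}
\div(P(X))=\sum_{i=1}^{2}\langle(\nabla_{e_i}P)(X),e_i\rangle+\sum_{i=1}^{2}\langle P(\nabla_{e_i}X),e_i\rangle.
\end{equation*}
Since $P$ is self-adjoint, the second sum equals $\mathrm{tr}(P\circ\nabla X)$; specialising to $X=2\nabla H$ and using $\nabla(\nabla H)=\mathrm{Hess}\,H$, this contribution is exactly $\square(2H)$ by the definition~\eqref{eq:2.33}.

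For the first sum I would use the self-adjointness of $\nabla_{e_i}P$ (inherited from that of $P$) to swap the two arguments, giving
\begin{equation*}
\sum_{i=1}^{2}\langle(\nabla_{e_i}P)(X),e_i\rangle=\Big\langle X,\sum_{i=1}^{2}(\nabla_{e_i}P)(e_i)\Big\rangle=\langle X,\mathrm{tr}(\nabla P)\rangle.
\end{equation*}
From $P=2HI-A$ it follows immediately that $\mathrm{tr}(\nabla P)=2\nabla H-\mathrm{tr}(\nabla A)$, and plugging in the identity~\eqref{eq:traza_nA} already established in this section produces
\begin{equation*}
\mathrm{tr}(\nabla P)=2\nabla H-\bigl(2\nabla H+C(\kappa-4\tau^{2})T\bigr)=-C(\kappa-4\tau^{2})T.
\end{equation*}

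Combining both pieces with $X=2\nabla H$ then gives
\begin{equation*}
\div(P(2\nabla H))=\square(2H)-2C(\kappa-4\tau^{2})\langle T,\nabla H\rangle=\square(2H)-2C(\kappa-4\tau^{2})T(H),
\end{equation*}
which is the claimed formula. There is no real obstacle in this argument: the only delicate point is the symmetry exchange $\langle(\nabla_{e_i}P)(X),e_i\rangle=\langle X,(\nabla_{e_i}P)(e_i)\rangle$, which rests on the self-adjointness of $P$, while the essential geometric input is the Codazzi-based identity~\eqref{eq:traza_nA}, which encodes the non-product nature of $\mathbb{E}^{3}(\kappa,\tau)$ through the correction term $C(\kappa-4\tau^{2})T$.
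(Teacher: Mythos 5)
Your argument is correct and is essentially the paper's own proof: the Leibniz-rule expansion you carry out is precisely the ``standard tensor computation'' the paper invokes to write ${\rm div}(P(2\nabla H))=\square(2H)+2\langle{\rm div}\,P,\nabla H\rangle$, and both proofs then reduce to computing ${\rm div}(P)={\rm tr}(\nabla P)=-C(\kappa-4\tau^{2})T$ from $P=2HI-A$ together with the Codazzi-based identity~\eqref{eq:traza_nA}. The only detail you make explicit that the paper leaves implicit is the self-adjointness of $\nabla_{e_i}P$, which is indeed inherited from that of $P$ and justifies the swap of arguments.
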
	
\begin{proof}
	Observe that by a standard tensor computation
	\begin{equation}\label{eq:3.6}
		{\rm div}(P(2\nabla H))=\square(2H)+2\langle{\rm div}\,P,\nabla H\rangle,
	\end{equation}
	where
	\begin{equation}\label{eq:3.7}
		{\rm div}\,(P)=\sum_{i=1}^{2}\nabla P(e_{i},e_{i})
	\end{equation}
	with
	\begin{equation}\label{eq:3.8}
		\nabla P(X,Y)=(\nabla_{Y}P)X=\nabla_{Y}(PX)-P(\nabla_{Y}X),
	\end{equation}
	for every $X,Y\in\mathfrak{X}(\Sigma)$. 
	
	It remains to compute the last term of equation~\eqref{eq:3.6}. Indeed, from~\eqref{eq:2.34},
	\begin{equation}\label{eq:3.9}
		\nabla P(X,Y)=2Y(H)X-\nabla A(X,Y),
	\end{equation}
	for every $X,Y\in\mathfrak{X}(\Sigma)$. Then,~\eqref{eq:traza_nA} implies that
	\begin{equation}\label{eq:3.10}
		{\rm div}\,(P)={\rm tr}(\nabla P)=2\nabla H-2\nabla H-(\kappa-4\tau^{2})C  T=-C(\kappa-4\tau^{2}) T,
	\end{equation}
	so finally~\eqref{eq:div:CY} follows from~\eqref{eq:3.6} and~\eqref{eq:3.10}.
\end{proof}

\section{Willmore surfaces in $\mathbb{S}_b^3(\kappa,\tau)$}\label{sec:4}

Let $x:\Sigma^{2}\rightarrow\mathbb{M}^3(\kappa)$ be an isometrically immersed orientable closed, i.e. compact without boundary, surface into the Riemannian space form $\mathbb{M}^{3}(\kappa)$ with constant sectional curvature $\kappa$. The Willmore functional is defined by
\begin{equation}\label{Willmore classic}
	\mathcal{W}(x)=\int_{\Sigma}(H^{2}+\kappa)dA,
\end{equation}
where $dA$ denotes the area element of the induced metric on $\Sigma^{2}$. Associated to this functional, there is the famous Willmore conjecture, solved in $2012$ by Marques and Neves~\cite{Coda:12}, which guarantees that this integral is at least $2\pi^{2}$ when $\Sigma^{2}$ is an immersed torus into $\mathbb{R}^{3}$. We say that $\Sigma^{2}$ is a {\em Willmore surface} if it is a stationary point for the functional $\mathcal{W}$. Moreover, it is well known that $\mathcal{W}$ is a conformal invariant and its Euler-Lagrange equation is given by (see~\cite{Bryant:84,Weiner:78})
\begin{equation}\label{eq:classic}
	\Delta H+|\Phi|^{2}H=0.
\end{equation}

For our interests, let $x:\Sigma^{2}\rightarrow\mathbb{E}^{3}(\kappa,\tau)$ be an isometrically immersed orientable closed surface into the homogeneous $3$-manifold $\mathbb{E}^{3}(\kappa,\tau)$. Following Weiner~\cite{Weiner:78}, we consider the following Willmore functional,
\begin{equation}\label{Willmore f}
	\mathcal{W}(x)=\int_{\Sigma}(H^{2}+\overline{K})dA,
\end{equation}
where at any $p\in\Sigma^2$, $\overline{K}$ denotes the sectional curvature of $T_p\Sigma$ in $\mathbb{E}^{3}(\kappa,\tau)$, which following~\eqref{eq:curv_E}-\eqref{eq:2.4} can be expressed as
\begin{equation}\label{eq:curvsect}
	\overline{K}=\tau^2+(\kappa-4\tau^2)C^2.
\end{equation}

In the following result we obtain the Euler-Lagrange equation of $\mathcal{W}$, extending the result of Weiner~\cite[Theorem 2.2]{Weiner:78} for immersed surfaces into the homogeneous space $\mathbb{E}^{3}(\kappa,\tau)$.

\begin{proposition}\label{lem:1}
	Let $x:\Sigma^{2}\rightarrow\mathbb{E}^{3}(\kappa,\tau)$ be an isometrically immersed orientable closed surface. Then $x$ is a stationary point of $\mathcal{W}$ if and only if
	\begin{equation}\label{eq:5.2}
		\Delta H+\left(|\Phi|^{2}+(\kappa-4\tau^{2})(1+C^{2})\right)\!H-2(\kappa-4\tau^{2})\langle A(T),T\rangle=0.
	\end{equation}
\end{proposition}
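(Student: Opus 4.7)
The plan is to compute the first variation of $\mathcal{W}$ under an arbitrary normal variation. Since tangential variations arise from reparametrizations and leave $\mathcal{W}$ invariant, I may take a smooth variation $x_{t}:\Sigma^{2}\to\mathbb{E}^{3}(\kappa,\tau)$ with $x_{0}=x$ and variational vector field $V:=\partial_{t}x_{t}|_{t=0}=fN$ for an arbitrary $f\in C^{\infty}(\Sigma)$. Using the classical identity $\partial_{t}(dA)|_{0}=-2Hf\,dA$ together with the expression $\overline{K}=\tau^{2}+(\kappa-4\tau^{2})C^{2}$ from~\eqref{eq:curvsect},
\begin{equation*}
\left.\frac{d}{dt}\mathcal{W}(x_{t})\right|_{t=0}=\int_{\Sigma}\bigl[2H\,\partial_{t}H+2(\kappa-4\tau^{2})C\,\partial_{t}C-2Hf(H^{2}+\overline{K})\bigr]\,dA,
\end{equation*}
so the crux of the computation is to obtain formulas for $\partial_{t}H$ and $\partial_{t}C$ at $t=0$.

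For $\partial_{t}C$ I would use $\overline{\nabla}_{V}N=-\nabla f$, which follows from differentiating $\langle N,\partial_{i}x_{t}\rangle=0$ together with $\langle N,N\rangle=1$, combined with~\eqref{eq:xi} in the form $\overline{\nabla}_{V}\xi=f\tau(N\wedge\xi)=f\tau J(T)$. Splitting $\xi=T+CN$ and noting that $J(T)\perp N$ while $\nabla f$ is tangent, I obtain $\partial_{t}C=\langle\overline{\nabla}_{V}N,\xi\rangle+\langle N,\overline{\nabla}_{V}\xi\rangle=-T(f)$. For $\partial_{t}H$, I differentiate $h_{ij}=\langle N,\overline{\nabla}_{\partial_{i}}\partial_{j}\rangle$ using $[V,\partial_{i}]=0$ and the paper's curvature convention to rewrite $\overline{\nabla}_{V}\overline{\nabla}_{\partial_{i}}\partial_{j}=\overline{\nabla}_{\partial_{i}}\overline{\nabla}_{V}\partial_{j}-\overline{R}(V,\partial_{i})\partial_{j}$, arriving at
\begin{equation*}
\partial_{t}h_{ij}=\mathrm{Hess}\,f(\partial_{i},\partial_{j})-f(A^{2})_{ij}-f\langle N,\overline{R}(N,\partial_{i})\partial_{j}\rangle.
\end{equation*}
Plugging~\eqref{eq:curv_E} into the curvature term, tracing with $g^{ij}$, and incorporating $\partial_{t}g^{ij}=2fh^{ij}$ gives, after using $|T|^{2}=1-C^{2}$ from~\eqref{eq:2.4},
\begin{equation*}
\partial_{t}H=\tfrac{1}{2}\Delta f+\tfrac{1}{2}|A|^{2}f+(\kappa-3\tau^{2})f-\tfrac{1}{2}(\kappa-4\tau^{2})(1+C^{2})f.
\end{equation*}

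The last step is integration by parts over the closed surface $\Sigma^{2}$. The $\Delta f$ contribution becomes $\int_{\Sigma}f\,\Delta H\,dA$, while $-2(\kappa-4\tau^{2})\int_{\Sigma}C\,T(f)\,dA=2(\kappa-4\tau^{2})\int_{\Sigma}f\,\mathrm{div}(CT)\,dA$. Expanding $\mathrm{div}(CT)=\langle\nabla C,T\rangle+C\,\mathrm{div}(T)$ and invoking~\eqref{eq:2.5} together with~\eqref{eq:divT} yields $\mathrm{div}(CT)=-\langle A(T),T\rangle+2C^{2}H$, the would-be piece $-\tau\langle J(T),T\rangle$ vanishing because $J$ is a $\pi/2$-rotation on $T\Sigma$. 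Collecting all coefficients of $f$, applying $|A|^{2}-2H^{2}=|\Phi|^{2}$, and using the identity $2(\kappa-3\tau^{2})-2\tau^{2}=2(\kappa-4\tau^{2})$ to regroup, the integrand becomes $f$ times
\begin{equation*}
\Delta H+\bigl(|\Phi|^{2}+(\kappa-4\tau^{2})(1+C^{2})\bigr)H-2(\kappa-4\tau^{2})\langle A(T),T\rangle,
\end{equation*}
so the arbitrariness of $f\in C^{\infty}(\Sigma)$ delivers~\eqref{eq:5.2}.

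The main obstacle is the careful bookkeeping of the ambient curvature contribution to $\partial_{t}H$: the $\tau^{2}$ and $C^{2}$ terms coming from $\langle N,\overline{R}(N,\cdot)\cdot\rangle$, the $|A|^{2}$ piece produced by $\partial_{t}g^{ij}$, the new tangential term extracted from $\partial_{t}\overline{K}$, and the area correction $-2Hf\overline{K}$ all simultaneously enter, and only collapse into the clean expression $(\kappa-4\tau^{2})(1+C^{2})H-2(\kappa-4\tau^{2})\langle A(T),T\rangle$ after tracking the paper's sign convention for $\overline{R}$ with full precision.
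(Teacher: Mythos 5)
Your proposal is correct and follows essentially the same route as the paper: compute the first variation of $H$, of $\overline{K}=\tau^{2}+(\kappa-4\tau^{2})C^{2}$ and of $dA$, then integrate by parts via $\mathrm{div}(CT)=2HC^{2}-\langle A(T),T\rangle$ and simplify with $\overline{\mathrm{Ric}}(N,N)=\kappa-2\tau^{2}-(\kappa-4\tau^{2})C^{2}$. The only (harmless) differences are that you reduce to purely normal variations at the outset instead of carrying $Y^{\top}$ through and cancelling it with divergence terms, and that you rederive the variation of $H$ from $\partial_{t}h_{ij}$ rather than quoting the known formula.
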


\begin{proof}
	Let us consider a variation of $x$, that is, a smooth map $X:(-\varepsilon,\varepsilon)\times\Sigma^2\rightarrow\mathbb{E}^{3}(\kappa,\tau)$ satisfying {that for each} $t\in(-\varepsilon,\varepsilon)$, the map $X_{t}:\Sigma^{2}\rightarrow\mathbb{E}^{3}(\kappa,\tau)$,
	given by $X_{t}(p)=X(t,p)$, is an immersion {and} $X_{0}=x$. Then, we can compute the first variation of $\mathcal{W}$ along $X$, that is,
	\begin{equation}\label{eq:5.3}
		\begin{split}
			\dfrac{d}{dt}\mathcal{W}(X_{t})\bigg|_{t=0}&=\dfrac{d}{d t}\int_{\Sigma}(H^{2}_{t}+\overline{K}_{t})dA_{t}\bigg|_{t=0}\\
			&=\int_{\Sigma}\left(\dfrac{d}{dt}(H^{2}_{t}+\overline{K}_{t})dA_{t}+(H^{2}_{t}+\overline{K}_{t})\dfrac{d}{d t}(dA_{t})\right)\bigg|_{t=0},
		\end{split}
	\end{equation}
	where, for each $t\in (-\varepsilon,\varepsilon)$, $H_t$ and $\overline{K}_t$ stand, respectively, for the mean curvature of $\Sigma^2$ and the sectional curvature of $T_p\Sigma$ in $\mathbb{E}^3(\kappa,\tau)$ with respect to the metric induced by $X_t$ and $dA_t$ denotes its volume element.
	
	Observe that, on the one hand, the following identity is well known (see for instance~\cite{do Carmo:88})
	\begin{equation}\label{eq:5.4}
		2\dfrac{dH_{t}}{dt}\bigg|_{t=0}=\Delta f+2\langle\nabla H,Y^{\top}\rangle+(\overline{\rm Ric}(N,N)+|A|^{2})\,f,
	\end{equation}
	{$\overline{\rm Ric}$ being the Ricci curvature tensor of $\mathbb{E}^3(\kappa,\tau)$ and $Y=\frac{\partial X}{\partial t}\big|_{t=0}$ the variational vector field related to the variation $X$, which can be decomposed as $Y=Y^\top+fN$ with $f=\langle Y,N\rangle$.}
	
	On the other hand, denoting by $N_t$ the unit normal vector field along $\Sigma^{2}$ with respect to the metric induced by $X_{t}$, since $N_0=N$ it holds
	\begin{equation}
		\dfrac{d\overline{K}_{t}}{d t}\bigg|_{t=0}=\dfrac{d}{dt}\left(\tau^{2}+(\kappa-4\tau^{2})\langle N_{t},\xi\rangle^{2}\right)\bigg|_{t=0}=2(\kappa-4\tau^{2})\langle N_{t},\xi\rangle\dfrac{d}{dt}\langle N_{t},\xi\rangle\bigg|_{t=0}.
	\end{equation}
	Since $Y=\frac{\partial X}{\partial t}\big|_{t=0}$ is a coordinate field, there exists an orthonormal frame $\{e_1,e_2\}$ in $\mathfrak{X}(\Sigma)$ such that $[Y,e_{k}]=0$, for any $k=1,2$. Thus, a direct computation gives us
	\begin{equation}
		\nabla f=-\overline{\nabla}_{Y}N-A(Y^{\top}).
	\end{equation}
	Then, from the integrability equations~\eqref{eq:2.5} we get
	\begin{equation}\label{eq:5.7}
		\dfrac{d\overline{K}_{t}}{dt}\bigg|_{t=0}=-2(\kappa-4\tau^{2})C\left(\langle\nabla f,T\rangle+\langle(A+\tau J)(T),Y^{\top}\rangle\right).
	\end{equation}
	Furthermore, by using Lemma $4.2$ of~\cite{Colares:97} (see also~\cite[Lemma 5.4]{Cao:07}), we have
	\begin{equation}
		\dfrac{d}{dt}(dA_{t})\bigg|_{t=0}=\left(-2Hf+{\rm div}(Y^{\top})\right)dA.
	\end{equation}
	Using the previous equalities we obtain
	\begin{equation}\label{eq:5.8}
		\begin{split}
			\dfrac{d}{dt}(H^{2}_{t}+\overline{K}_{t})\bigg|_{t=0}dA=&2H\dfrac{dH_{t}}{dt}\bigg|_{t=0}dA+\dfrac{d\overline{K}_{t}}{dt}\bigg|_{t=0}dA\\
			=&H\left(\Delta f+(\overline{\rm Ric}(N,N)+|A|^{2})f\right)dA+\langle\nabla H^{2},Y^{\top}\rangle dA\\
			&-2(\kappa-4\tau^{2})C\left(\langle\nabla f,T\rangle+\langle(A+\tau J)(T),Y^{\top}\rangle\right)dA
		\end{split}
	\end{equation}
	and
	\begin{equation}\label{eq:5.9}
		(H^{2}+\overline{K})\dfrac{d}{dt}(dA_{t})\bigg|_{t=0}=-2H(H^{2}+\overline{K})fdA+(H^{2}+\overline{K}){\rm div}(Y^{\top})dA.
	\end{equation}
	
	Let us also observe that
	\begin{equation}\label{eq:5.10}
		{\rm div}(H^{2}Y^{\top})=H^{2}{\rm div}(Y^{\top})+\langle\nabla H^{2},Y^{\top}\rangle.
	\end{equation}
	From~\eqref{eq:2.5} it also holds
	\begin{equation}\label{eq:5.11}
		{\rm div}(\overline{K}Y^{\top})=\overline{K}{\rm div}(Y^{\top})-2(\kappa-4\tau^{2})C\langle(A+\tau J)(T),Y^{\top}\rangle.
	\end{equation}
	Then, it follows from~\eqref{eq:5.9} that
	\begin{equation}\label{eq:5.12}
		\begin{split}
			(H^{2}+\overline{K})\dfrac{d}{dt}(dA_{t})\bigg|_{t=0}=&-2H(H^{2}+\overline{K})fdA+{\rm div}(H^{2}Y^{\top})dA-\langle\nabla H^{2},Y^{\top}\rangle dA\\
			&+{\rm div}(\overline{K}Y^{\top})dA+2(\kappa-4\tau^{2})C\langle(A+\tau J)(T),Y^{\top}\rangle dA.
		\end{split}
	\end{equation}
	Hence, replacing~\eqref{eq:5.8} and~\eqref{eq:5.12} in~\eqref{eq:5.3}, we get
	\begin{equation}\label{eq:5.13}
		\begin{split}
			\dfrac{d}{dt}\mathcal{W}(X_{t})\bigg|_{t=0}=&\int_{\Sigma}\left(H\Delta f+H(\overline{\rm Ric}(N,N)+|A|^{2})f-2H(H^{2}+\overline{K})f\right)dA\\
			&-2(\kappa-4\tau^{2})\int_{\Sigma}C\langle\nabla f,T\rangle dA\\
			=&\int_{\Sigma}\left(\Delta H+(\overline{\rm Ric}(N,N)+|A|^{2})H-2H(H^{2}+\overline{K})\right)fdA\\
			&-2(\kappa-4\tau^{2})\int_{\Sigma}C\langle\nabla f,T\rangle dA.
		\end{split}
	\end{equation}
	Besides this, from~\eqref{eq:2.5} and~\eqref{eq:divT},
	\begin{equation}\label{eq:5.11.1}
		\begin{split}
			{\rm div}(CfT)&=Cf{\rm div}(T)+C\langle\nabla f,T\rangle+f\langle\nabla C,T\rangle\\
			&=2HC^{2}f+C\langle\nabla f,T\rangle-f\langle A(T),T\rangle.
		\end{split}
	\end{equation}
	Therefore
	\begin{equation}\label{eq:5.13.1}
		\begin{split}
			\dfrac{d}{dt}\mathcal{W}(X_{t})\bigg|_{t=0}=&\int_{\Sigma}\left(H\Delta f+H(\overline{\rm Ric}(N,N)+|A|^{2})f-2H(H^{2}+\overline{K})f\right)dA\\
			&+2(\kappa-4\tau^{2})\int_{\Sigma}\left(2HC^{2}-\langle A(T),T\rangle\right)fdA\\
			=&\int_{\Sigma}\left(\Delta H+(\overline{\rm Ric}(N,N)+|A|^{2})H-2H(H^{2}+\overline{K})\right)fdA\\
			&+2(\kappa-4\tau^{2})\int_{\Sigma}\left(2HC^{2}-\langle A(T),T\rangle\right)fdA.
		\end{split}
	\end{equation}
	Consequently $x$ is a stationary point of the Willmore functional $\mathcal{W}$ if and only if
	\begin{equation}\label{eq:5.14}
		\Delta H+\left(|\Phi|^{2}+\overline{\rm Ric}(N,N)-2\overline{K}+4(\kappa-4\tau^{2})C^{2}\right)H-2(\kappa-4\tau^{2})\langle A(T),T\rangle=0.
	\end{equation}
	Finally, by an straightforward computation from~\eqref{eq:curv_E} and~\eqref{eq:2.4} we easily obtain
	\begin{equation}\label{eq:RicN}
		\overline{\rm Ric}(N,N)=\kappa-2\tau^{2}-(\kappa-4\tau^{2})C^{2},
	\end{equation}
	which jointly with~\eqref{eq:curvsect} and~\eqref{eq:5.14} yields the desired result.
\end{proof}

\begin{remark}
	It is not difficult to check that minimal tori and Hopf cylinders over a curve of geodesic curvature $k_{g}=\sqrt{2(2\tau^{2}-\kappa)}$, for all $\kappa,\tau\in\mathbb{R}$ with $\kappa<2\tau^{2}$ satisfy~\eqref{eq:5.14}. So, they are stationary points of the Willmore functional $\mathcal{W}$.
\end{remark}

Before presenting our classification result for Willmore surfaces in $\mathbb{E}^3(\kappa,\tau)$, we firstly need the following lemma whose proof follows the ideas developed in~\cite[Lemma 2.1]{Guo:04} (see also~\cite{Huisken:84}).

\begin{lemma}\label{lem:Kato-type}
	If $\Sigma^{2}$ is an isometrically immersed orientable surface into the homogeneous space $\mathbb{E}^{3}(\kappa,\tau)$, then
	\begin{equation}\label{eq:5.15}
		|\nabla A|^{2}\geq3|\nabla H|^{2}+2(\kappa-4\tau^{2})C\langle\nabla H,T\rangle.
	\end{equation}
\end{lemma}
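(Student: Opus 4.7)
The plan is to reduce \eqref{eq:5.15} to a pointwise algebraic inequality by working in a local orthonormal frame. At a fixed point $p\in\Sigma^2$, I would pick an orthonormal basis $\{e_1,e_2\}$ of $T_p\Sigma$ and set $h_{ij,k}=\langle(\nabla_{e_k}A)(e_i),e_j\rangle$; these coefficients are symmetric in $i,j$ because $A$ is self-adjoint, and Codazzi~\eqref{eq:2.11} gives
\begin{equation*}
h_{ij,k}-h_{kj,i}=(\kappa-4\tau^{2})C(T_i\delta_{jk}-T_k\delta_{ij}),
\end{equation*}
with $T_i=\langle T,e_i\rangle$. Abbreviating $\mu_i:=(\kappa-4\tau^{2})CT_i$, the only nontrivial instances in dimension two read $h_{12,1}=h_{11,2}+\mu_2$ and $h_{12,2}=h_{22,1}+\mu_1$, so the six independent components of $\nabla A$ are encoded by the four scalars $a=h_{11,1}$, $b=h_{11,2}$, $c=h_{22,1}$, $d=h_{22,2}$ together with $(\mu_1,\mu_2)$.

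Next I would rewrite the three quantities in \eqref{eq:5.15} in these variables. A direct computation yields
\begin{equation*}
|\nabla A|^{2}=a^{2}+b^{2}+c^{2}+d^{2}+2(b+\mu_2)^{2}+2(c+\mu_1)^{2},
\end{equation*}
while $2\langle\nabla H,e_k\rangle=h_{11,k}+h_{22,k}$ gives $|\nabla H|^{2}=\tfrac{1}{4}[(a+c)^{2}+(b+d)^{2}]$ and $(\kappa-4\tau^{2})C\langle\nabla H,T\rangle=\tfrac{1}{2}[\mu_1(a+c)+\mu_2(b+d)]$. Substituting, the target inequality becomes the algebraic statement
\begin{equation*}
a^{2}+b^{2}+c^{2}+d^{2}+2(b+\mu_2)^{2}+2(c+\mu_1)^{2}\;\geq\;\tfrac{3}{4}[(a+c)^{2}+(b+d)^{2}]+\mu_1(a+c)+\mu_2(b+d).
\end{equation*}
Expanding everything and grouping the cross terms, this difference can be rearranged as a sum of completed squares,
\begin{equation*}
\tfrac{1}{4}\bigl[(a-3c)-2\mu_1\bigr]^{2}+\mu_1^{2}+\tfrac{1}{4}\bigl[(d-3b)-2\mu_2\bigr]^{2}+\mu_2^{2}\;\geq\;0,
\end{equation*}
which is manifestly nonnegative.

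I do not expect a real conceptual obstacle here; the work is entirely book-keeping. The main technical point is keeping track of the Codazzi correction $\mu_i$, which is responsible for the extra linear term $2(\kappa-4\tau^{2})C\langle\nabla H,T\rangle$ on the right-hand side of \eqref{eq:5.15}. When $\kappa=4\tau^{2}$ the $\mu_i$ vanish and the proof collapses to the classical Kato inequality $|\nabla A|^{2}\geq 3|\nabla H|^{2}$ for surfaces in a space form; in our setting the same square completion still works after shifting the relevant squares by $2\mu_i$ and absorbing the residual $\mu_i^{2}$ terms into the nonnegative remainder. The one place to be careful is verifying that the $\mu_i$-dependent cross terms on both sides match precisely so that the completed-square decomposition closes, and this is guaranteed by the symmetry of the Codazzi correction between the two pairs $(a,c,\mu_1)$ and $(b,d,\mu_2)$.
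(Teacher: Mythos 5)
Your proof is correct; I checked the frame computation in detail. With $\mu_i=(\kappa-4\tau^2)CT_i$, the Codazzi relations $h_{12,1}=h_{11,2}+\mu_2$ and $h_{12,2}=h_{22,1}+\mu_1$ are exactly what \eqref{eq:2.11} gives, your expressions for $|\nabla A|^2$, $|\nabla H|^2$ and $(\kappa-4\tau^2)C\langle\nabla H,T\rangle$ in the variables $a,b,c,d,\mu_1,\mu_2$ are right, and the difference of the two sides of \eqref{eq:5.15} does equal
\begin{equation*}
\tfrac{1}{4}\bigl[(a-3c)-2\mu_1\bigr]^{2}+\mu_1^{2}+\tfrac{1}{4}\bigl[(d-3b)-2\mu_2\bigr]^{2}+\mu_2^{2},
\end{equation*}
as one verifies by expanding both sides. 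The paper reaches the same inequality by a different, frame-free route borrowed from Guo and Huisken: it introduces the auxiliary tensor
$F(X,Y,Z)=\langle\nabla A(X,Y),Z\rangle+a\left(\langle\nabla H,X\rangle\langle Y,Z\rangle+\langle\nabla H,Y\rangle\langle X,Z\rangle+\langle\nabla H,Z\rangle\langle X,Y\rangle\right)$, uses Codazzi and the trace identity \eqref{eq:traza_nA} to compute
$|F|^{2}=|\nabla A|^{2}+2a\left(6|\nabla H|^{2}+2(\kappa-4\tau^{2})C\langle\nabla H,T\rangle\right)+12a^{2}|\nabla H|^{2}\geq 0$,
and then optimizes by taking $a=-1/2$. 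Both arguments are at bottom the positivity of the same quadratic form, with the Codazzi correction responsible for the linear term in \eqref{eq:5.15}; the trade-off is that your component calculation is tied to dimension two but produces an explicit sum-of-squares certificate (and hence an immediate, transparent description of the equality case), whereas the paper's tensorial version involves less bookkeeping and carries over verbatim to higher dimension and codimension.
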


\begin{proof}
	Given any $a\in\mathbb{R},$ let us consider the following tensor
	\begin{equation}
		\begin{split}
			F(X,Y,Z)=&\langle\nabla A(X,Y),Z\rangle\\
			&+a\left(\langle\nabla H,X\rangle\langle Y,Z\rangle+\langle\nabla H,Y\rangle\langle X,Z\rangle+\langle\nabla H,Z\rangle\langle X,Y\rangle\right).
		\end{split}
	\end{equation}
	A direct computation gives
	\begin{equation}
		F(X,Y,Z)^{2}=\langle\nabla A(X,Y),Z\rangle^{2}+2aQ_{1}(X,Y,Z)+a^{2}Q_{2}(X,Y,Z),
	\end{equation}
	where
	\begin{equation}
		\begin{split}
			Q_{1}(X,Y,Z)&=\langle\nabla A(X,Y),Z\rangle\left(\langle\nabla H,X\rangle\langle Y,Z\rangle+\langle\nabla H,Y\rangle\langle X,Z\rangle+\langle\nabla H,Z\rangle\langle X,Y\rangle\right)
		\end{split}
	\end{equation}
	and
	\begin{equation}
		\begin{split}
			Q_{2}(X,Y,Z)=&\left(\langle\nabla H,X\rangle^{2}\langle Y,Z\rangle^{2}+\langle\nabla H,Y\rangle^{2}\langle X,Z\rangle^{2}+\langle\nabla H,Z\rangle^{2}\langle X,Y\rangle^{2}\right)\\
			&+2\left(\langle\nabla H,X\rangle\langle Y,Z\rangle\langle\nabla H,Y\rangle\langle X,Z\rangle+\langle\nabla H,X\rangle\langle Y,Z\rangle\langle\nabla H,Z\rangle\langle X,Y\rangle\right)\\
			&+2\langle\nabla H,Y\rangle\langle X,Z\rangle\langle\nabla H,Z\rangle\langle X,Y\rangle.
		\end{split}
	\end{equation}
	In order to compute these last terms, let us take $\{e_{1},e_2\}$ an orthonormal frame on $\mathfrak{X}(\Sigma)$. Then, it is not difficult to check that
	\begin{equation}
		\sum_{i,j,k}^{2}\langle\nabla A(e_{i},e_{j}),e_{k}\rangle^{2}=|\nabla A|^{2}\quad\mbox{and}\quad\sum_{i,j,k}^{2}Q_{2}(e_{i},e_{j},e_{k})=12|\nabla H|^{2}.
	\end{equation}
	Besides that, from Codazzi equation and~\eqref{eq:traza_nA}, we have
	\begin{equation}
		\begin{split}
			\sum_{i,j,k}^{2}Q_{1}(e_{i},e_{j},e_{k})=&\sum_{i,j=1}^{2}\left(\langle\nabla A(e_{i},e_{j}),e_{j}\rangle\langle\nabla H,e_{i}\rangle+\langle\nabla A(e_{i},e_{j}),e_{i}\rangle\langle\nabla H,e_{j}\rangle\right)\\
			&+\sum_{i=1}^{2}\langle\nabla A(e_{i},e_{i}),\nabla H\rangle\\
			=&6|\nabla H|^{2}+2(\kappa-4\tau^{2})C\langle\nabla H,T\rangle.
		\end{split}
	\end{equation}
	Hence,
	\begin{equation}
		|F|^{2}=|\nabla A|^{2}+2a\left(6|\nabla H|^{2}+2(\kappa-4\tau^{2})C\langle\nabla H,T\rangle\right)+12a^{2}|\nabla H|^{2}.
	\end{equation}
	Taking $a=-1/2$ we obtain~\eqref{eq:5.15}.
\end{proof}

We can finally present our first main result.

\begin{theorem}\label{teo:Willmore surfaces}
	Let $\Sigma^{2}$ be an isometrically immersed orientable closed Willmore surface into an homogeneous space $\mathbb{E}^{3}(\kappa,\tau)$. Then,
	\begin{equation}
		\begin{split}
			\int_{\Sigma}&\left(|\Phi|^{4}-\left(2\tau^{2}-(\kappa-4\tau^{2})(1-3C^{2})\right)|\Phi|^{2}\right)dA\\
			&\qquad-(\kappa-4\tau^{2})\int_{\Sigma}\left(|\nabla C|^{2}+(K_{e}+\tau^{2})(1-5C^{2})+2\tau^{2}(1-3C^{2})\right)dA\geq0.
		\end{split}
	\end{equation}
	where the equality holds if and only if $\Sigma^{2}$ is a parallel surface.
	
	In particular, if $\kappa<2\tau^{2}$ the equality holds if and only if\, $\mathbb{E}^3(\kappa,\tau)=\mathbb{S}_b^3(\kappa,\tau)$ and $\Sigma^{2}$ is either a Clifford torus or a Hopf torus over a closed curve of geodesic curvature $\sqrt{2(2\tau^2-\kappa)}$ on $\mathbb{S}^{2}(\kappa)$.
\end{theorem}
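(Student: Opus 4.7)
The plan is to derive the integral inequality by combining four tools already established in the paper: the Simons-type formula (Proposition~\ref{prop:2.1}), the divergence identity (Lemma~\ref{divergencias:CY}), the Willmore Euler--Lagrange equation (Proposition~\ref{lem:1}), and the Kato-type inequality (Lemma~\ref{lem:Kato-type}). First I would integrate Lemma~\ref{divergencias:CY} over the closed surface $\Sigma$: the divergence term vanishes, producing the identity $\int_{\Sigma}\square(2H)\,dA = 2(\kappa-4\tau^{2})\int_{\Sigma}C\,T(H)\,dA$. Substituting the Simons expression from Proposition~\ref{prop:2.1} on the left and using $\int_{\Sigma}\Delta K_{e}\,dA=0$ (closed surface) gives a global identity involving $\int_{\Sigma}|\nabla A|^{2}$, $\int_{\Sigma}|\nabla H|^{2}$, the curvature polynomial $|\Phi|^{2}(2K_{e}+(\kappa-4\tau^{2})(5C^{2}-1)+2\tau^{2})$, and the cross terms $H\langle\Phi(T),T\rangle$, $\tau\langle\Phi(T),J(T)\rangle$.

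Next I would lower-bound $|\nabla A|^{2}$ by Lemma~\ref{lem:Kato-type}; the $\int_{\Sigma}C\,T(H)\,dA$ contributions cancel exactly, so that $\int_{\Sigma}|\nabla H|^{2}\,dA$ remains the only derivative term. To eliminate it I would multiply the Willmore equation~\eqref{eq:5.2} by $2H$, integrate over $\Sigma$, and use $\int_{\Sigma}2H\Delta H\,dA=-2\int_{\Sigma}|\nabla H|^{2}\,dA$ to express $\int_{\Sigma}|\nabla H|^{2}\,dA$ as an algebraic integral in $H$, $|\Phi|$, $C$ and $\langle A(T),T\rangle$. Plugging back reduces the whole argument to pointwise algebra.

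The remaining step is a rewrite using $|T|^{2}=1-C^{2}$, $A=\Phi+HI$, the Gauss identity $2H^{2}=|\Phi|^{2}+2K_{e}$, and the characteristic-polynomial identity~\eqref{eq:2.29}, together with the key observation that by~\eqref{eq:2.5} one has $|\nabla C|^{2}=|(A+\tau J)(T)|^{2}=\langle A^{2}(T),T\rangle+2\tau\langle A(T),J(T)\rangle+\tau^{2}|T|^{2}$. I expect matching coefficients against the target integrand $|\Phi|^{4}-(2\tau^{2}-(\kappa-4\tau^{2})(1-3C^{2}))|\Phi|^{2}-(\kappa-4\tau^{2})(|\nabla C|^{2}+(K_{e}+\tau^{2})(1-5C^{2})+2\tau^{2}(1-3C^{2}))$ to be the main technical obstacle, since several $H^{2}$-, $K_{e}$- and $C^{2}$-dependent contributions only cancel after the right substitutions.

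For the equality case, equality forces pointwise vanishing of the auxiliary tensor $F$ in Lemma~\ref{lem:Kato-type}, so $\nabla A$ becomes fully symmetric and determined by $\nabla H$. In particular $\nabla A$ is symmetric in its first two arguments, and Codazzi~\eqref{eq:2.11} then yields $(\kappa-4\tau^{2})C(\langle X,T\rangle Y-\langle Y,T\rangle X)=0$, hence $CT\equiv 0$; connectedness together with the non-integrability of the horizontal distribution when $\tau\neq 0$ (which excludes $C^{2}\equiv 1$) forces $C\equiv 0$. So $\Sigma$ is a Hopf cylinder, $T=\xi$ is a parallel unit tangent field and $A(\xi)=-\tau J(\xi)$ by~\eqref{eq:2.5}. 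Since $H$ is constant along fibers, $TH=0$; evaluating $F\equiv 0$ on the triple $(\xi,\xi,Z)$ and using $\nabla A(\xi,\xi)=-\tau\nabla_{\xi}J(\xi)=0$ yields $\nabla H=0$, hence $\nabla A\equiv 0$ and $\Sigma$ is parallel. Closedness forces the fibers of $\pi$ to be circles, i.e.\ the ambient is a Berger sphere, and Lemma~\ref{lem:4.1} identifies $\Sigma$ as a Hopf torus over a Riemannian circle of constant geodesic curvature $k_{g}=2H$. Inserting $C=0$, $T=\xi$, $\langle A(T),T\rangle=0$, $|\Phi|^{2}=2H^{2}+2\tau^{2}$ into~\eqref{eq:5.2} gives $H(2H^{2}+\kappa-2\tau^{2})=0$, so either $H=0$ (the Clifford torus, the unique minimal Hopf torus) or $k_{g}=\sqrt{2(2\tau^{2}-\kappa)}$, the latter being possible only when $\kappa<2\tau^{2}$.
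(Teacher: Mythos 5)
Your overall architecture coincides with the paper's: combine the Simons-type formula of Proposition~\ref{prop:2.1}, the divergence identity of Lemma~\ref{divergencias:CY}, the Kato-type estimate of Lemma~\ref{lem:Kato-type} and the Euler--Lagrange equation~\eqref{eq:5.2}, integrate over the closed surface, and classify the equality case via parallelism and Lemma~\ref{lem:4.1}. However, there is a concrete quantitative gap at the step you yourself flag as the main obstacle. Write $S$ for the algebraic part of Proposition~\ref{prop:2.1}, that is, $S=|\Phi|^{2}\bigl(2K_{e}+(\kappa-4\tau^{2})(5C^{2}-1)+2\tau^{2}\bigr)-2(\kappa-4\tau^{2})\bigl(H\langle\Phi(T),T\rangle+\tau\langle\Phi(T),J(T)\rangle\bigr)$. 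Your chain (integrate Lemma~\ref{divergencias:CY}, insert Proposition~\ref{prop:2.1}, apply Lemma~\ref{lem:Kato-type} so that the $C\,T(H)$ terms cancel) uses $|\nabla A|^{2}-4|\nabla H|^{2}-2(\kappa-4\tau^{2})C\langle\nabla H,T\rangle\geq-|\nabla H|^{2}$ and therefore yields $\int_{\Sigma}S\,dA\leq\int_{\Sigma}|\nabla H|^{2}\,dA$. The theorem's inequality is instead the rearrangement of $\int_{\Sigma}S\,dA\leq 2\int_{\Sigma}|\nabla H|^{2}\,dA=-2\int_{\Sigma}H\Delta H\,dA$: the paper reaches it by discarding the nonnegative middle term in the chain $|\nabla A|^{2}-2|\nabla H|^{2}\geq|\nabla H|^{2}+2(\kappa-4\tau^{2})C\langle\nabla H,T\rangle\geq2(\kappa-4\tau^{2})C\langle\nabla H,T\rangle$. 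After you substitute $\int_{\Sigma}|\nabla H|^{2}\,dA$ through the Willmore equation, your integrand becomes $\tfrac{1}{2}|\Phi|^{4}-K_{e}|\Phi|^{2}+\cdots$ rather than $|\Phi|^{4}+\cdots$; in particular a residual $-K_{e}|\Phi|^{2}$ term survives that is absent from the target, so the coefficients genuinely do not match and the final ``pointwise algebra'' cannot produce the stated integrand. The fix is one line --- add back the discarded $\int_{\Sigma}|\nabla H|^{2}\,dA\geq0$ (your inequality is in fact sharper than the stated one) --- but it must be made explicitly, and the equality discussion must then also record that equality forces $H$ to be constant.

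Your treatment of the equality case is otherwise correct and somewhat more self-contained than the paper's: from $F\equiv0$ you deduce via Codazzi that $CT\equiv0$, exclude $C^{2}\equiv1$ by non-integrability of the horizontal distribution when $\tau\neq0$, obtain $C\equiv0$, and then derive $\nabla H\equiv0$ by evaluating $F$ on $(\xi,\xi,\cdot)$, whence $\nabla A\equiv0$. Since this argument produces $\nabla H\equiv0$ anyway, the characterization survives the correction above. Two small caveats: for the general first assertion (no restriction on $\kappa,\tau$) the branch $C^{2}\equiv1$, i.e.\ the slices of $\mathbb{M}^{2}(\kappa)\times\mathbb{R}$ when $\tau=0$, must be admitted as parallel surfaces, exactly as in Lemma~\ref{lem:4.1}(2); and the converse direction (every parallel surface realizes equality) should be stated, which follows since parallelism forces $C|T|=0$ and hence $\nabla H=0$ and $F=0$. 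The final identification of the Clifford torus and of the Hopf tori with $k_{g}=\sqrt{2(2\tau^{2}-\kappa)}$ from equation~\eqref{eq:5.2} agrees with the paper.
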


\begin{proof}
	Firstly, {taking into account~\eqref{eq:2.38}}, \eqref{eq:2.35} can be written as follows,
	\begin{equation}
		\begin{split}
			\square(2H)&=4H\Delta H-2{\rm tr}(A\circ {\rm Hess}\,H)\\
			&=2H\Delta H-\dfrac{1}{2}\Delta|\Phi|^{2}-2|\nabla H|^{2}+\dfrac{1}{2}\Delta|A|^{2}-2{\rm tr}(A\circ {\rm Hess}\,H),
		\end{split}
	\end{equation}
	where $\Delta H^{2}=2H\Delta H+2|\nabla H|^{2}$ has been used. Consequently, by~\eqref{eq:2.32},
	\begin{equation}\label{eq:nuevasq2H}
		\begin{split}
			\square(2H)=&2H\Delta H-\dfrac{1}{2}\Delta|\Phi|^{2}+|\nabla A|^{2}-2|\nabla H|^{2}\\
			&+|\Phi|^{2}\left(2K_{e}+(\kappa-4\tau^{2})(5C ^{2}-1)+2\tau^{2}\right)\\
			&-2(\kappa-4\tau^{2})\left(H\langle\Phi( T), T\rangle+\tau\langle\Phi( T),J( T)\rangle\right).
		\end{split}
	\end{equation}
	Let us observe now that from Lemma~\ref{lem:Kato-type} we get
	\begin{equation}\label{eq:Kato-type-aux}
		\begin{split}
			|\nabla A|^2-2|\nabla H|^2&\geq |\nabla H|^2+2(\kappa-4\tau^2)C\langle \nabla H, T \rangle\\
			&\geq 2(\kappa-4\tau^2)C\langle \nabla H, T \rangle,
		\end{split}
	\end{equation}
	where the equality holds if and only if
	\begin{equation}\label{eq:Kato-type-parallel}
		|\nabla A|^2=3|\nabla H|^2+2(\kappa-4\tau^2)C\langle \nabla H,T\rangle=0,
	\end{equation}
	that is, if and only if $\Sigma^2$ is a parallel surface.
	{Then, from Lemma~\ref{divergencias:CY} and taking into account~\eqref{eq:nuevasq2H} and~\eqref{eq:Kato-type-aux} we obtain the following inequality,}
	\begin{equation}
		\begin{split}
			{\rm div}(P(2\nabla H))&=\square(2H)-2(\kappa-4\tau^{2})C\langle\nabla H,T\rangle\\
			&\geq2H\Delta H-\dfrac{1}{2}\Delta|\Phi|^{2}+|\Phi|^{2}\left(2K_{e}+(\kappa-4\tau^{2})(5C ^{2}-1)+2\tau^{2}\right)\\
			&\quad-2(\kappa-4\tau^{2})\left(H\langle\Phi(T),T\rangle+\tau\langle\Phi(T),J(T)\rangle\right).
		\end{split}
	\end{equation}
	Therefore, the divergence theorem yields
	\begin{equation}
		\begin{split}
			-2\int_{\Sigma}H\Delta HdA&\geq\int_{\Sigma}|\Phi|^{2}\left(2K_{e}+(\kappa-4\tau^{2})(5C^{2}-1)+2\tau^{2}\right)dA\\
			&\quad-2(\kappa-4\tau^{2})\int_{\Sigma}\left(H\langle\Phi(T),T\rangle+\tau\langle\Phi(T),J(T)\rangle\right)dA.
		\end{split}
	\end{equation}
	
	On the one hand, from Proposition~\ref{lem:1} we can write
	\begin{equation}\label{eq:HDeltaH}
		\begin{split}
			2H\Delta H&=-2H^{2}\left(|\Phi|^{2}+(\kappa-4\tau^{2})(1+C^{2})\right)+4(\kappa-4\tau^{2})H\langle A(T),T\rangle\\
			&=-\left(|\Phi|^{2}+2K_{e}\right)\left(|\Phi|^{2}+(\kappa-4\tau^{2})(3C^{2}-1)\right)+4(\kappa-4\tau^{2})H\langle\Phi(T),T\rangle\\
			&=-|\Phi|^{2}\left(2K_{e}+|\Phi|^{2}+(\kappa-4\tau^{2})(3C^{2}-1)\right)-2(\kappa-4\tau^{2})(3C^{2}-1)K_{e}\\&\qquad+4(\kappa-4\tau^{2})H\langle\Phi(T),T\rangle,
		\end{split}
	\end{equation}
	where we have used that
	\begin{equation}\label{eq:aux_normphi}
		|\Phi|^2-2H^2=-2K_e,
	\end{equation}
	which follows from~\eqref{eq:2.9} and~\eqref{eq:2.38}. Hence,
	\begin{equation}\label{eq:5.17}
		\begin{split}
			0\geq&\int_{\Sigma}|\Phi|^{2}\left(-|\Phi|^{2}+2(\kappa-4\tau^{2})C^{2}+2\tau^{2}\right)dA+2(\kappa-4\tau^{2})\int_{\Sigma}(1-3C^{2})K_{e}dA\\
			&\quad+2(\kappa-4\tau^{2})\int_{\Sigma}\left(H\langle\Phi(T),T\rangle-\tau\langle\Phi(T),J(T)\rangle\right)dA.
		\end{split}
	\end{equation}
	
	On the other hand, by using $A^2-2HA+K_eI=A^2-2H\Phi-\left(|\Phi|^2+K_e\right)I=0$ and the integrability equation~\eqref{eq:2.5}, we easily obtain
	\begin{equation}\label{eq:normnC2}
		|\nabla C|^2=2H\langle \Phi (T),T\rangle+2\tau\langle \Phi (T),J (T)\rangle+\left(|\Phi|^2+K_e+\tau^2\right)|T|^2.
	\end{equation}
	Now, let us consider the local orthonormal frame on $\mathfrak{X}(\Sigma)$, $\{e_1,e_2\}$ such that $e_1=\frac{T}{|T|}$ and $e_2=J(e_1)$ we get
	\begin{equation}\label{eq:div_JT_Aux}
		{\rm div}(J(T))=-\langle J(T),\nabla_{e_1}e_1\rangle+e_2(|T|),
	\end{equation}
	which using once more the integrability equations~\eqref{eq:2.5} yields
	\begin{equation}\label{eq:div_JT}
		{\rm div}(J(T))=2\tau C.
	\end{equation}
	So, from~\eqref{eq:2.5} and~\eqref{eq:div_JT},
	\begin{equation}\label{eq:div_JT1}
		{\rm div}(\tau CJ(T))=2\tau^{2}C^{2}-\tau\langle(A+\tau J)T,J(T)\rangle=-\tau\langle\phi(T),J(T)\rangle-\tau^{2}(1-3C^{2}).
	\end{equation}
	Thus, by~\eqref{eq:normnC2}
	\begin{equation}\label{eq:normnC21}
		\begin{split}
			2H\langle\Phi(T),T\rangle-2\tau\langle\phi(T),J(T)\rangle
			&=|\nabla C|^{2}+4{\rm div}(\tau CJ(T))+\tau^{2}(3-11C^{2})\\
			&\qquad-(|\Phi|^{2}+K_{e})(1-C^{2})
		\end{split}
	\end{equation}
	Therefore, by~\eqref{eq:5.17} we obtain
	\begin{equation}\label{eq:5.18}
		\begin{split}
			0\geq\int_{\Sigma}|\Phi|^{2}&\left(-|\Phi|^{2}+(\kappa-4\tau^{2})(3C^{2}-1)+2\tau^{2}\right)dA\\
			&\quad+(\kappa-4\tau^{2})\int_{\Sigma}\left(|\nabla C|^{2}+(K_{e}+\tau^{2})(1-5C^{2})+2\tau^{2}(1-3C^{2})\right)dA.
		\end{split}
	\end{equation}
	which is the desired inequality.
	
	Moreover, as we have remarked before, the equality holds in~\eqref{eq:5.18} if and only if $\Sigma^2$ is a closed parallel surface. Then, from Lemma~\ref{lem:4.1} $\Sigma^2$ is either a Hopf torus (necessarily in $\mathbb{S}^3_b(\kappa,\tau)$) over a Riemannian circle in $\mathbb{S}^2(\kappa)$, or a piece of a slice in $\mathbb{M}^2(\kappa)\times\mathbb{R}$. However, since $\Sigma^2$ is closed this last case only occurs in the case $\tau=0$ and $\kappa>0$, which do not satisfy the assumption $\kappa<2\tau^2$.
	
	Consequently, $\Sigma^2$ is a Hopf torus in $\mathbb{S}^3_b(\kappa,\tau)$, so in particular $C=0$ and $K_{e}=-\tau^{2}$. Hence,~\eqref{eq:HDeltaH} reads
	\begin{equation}
		0=\left(-|\Phi|^{2}+2\tau^{2}\right)(|\Phi|^{2}+\kappa-4\tau^{2}.
	\end{equation}
	Then, either $|\Phi|^{2}=2\tau^{2}$, which implies that $H=0$ and $\Sigma^{2}$ is the Clifford torus, or $|\Phi|^{2}+\kappa-4\tau^{2}=0$. Thus, from~\eqref{eq:aux_normphi} we get $H=\sqrt{\frac{2\tau^2-\kappa}{2}}$ and, consequently, $\Sigma^{2}$ is isometric to a Hopf torus in $\mathbb{S}^3_b(\kappa,\tau)$ over a curve of geodesic curvature $\sqrt{2\left(2\tau^{2}-\kappa\right)}$ on $\mathbb{S}^{2}(\kappa)$, for $0<\kappa<2\tau^{2}$.
\end{proof}

\section{Classification results for constant extrinsic curvature closed surfaces}\label{sec:5}

Let us begin by obtaining some new interesting divergence formulae, which will play a fundamental role in the proof of the main results in this section.

\begin{lemma}\label{divergencias}
	Let $\Sigma^{2}$ be an isometrically immersed surface into the homogeneous space $\mathbb{E}^{3}(\kappa,\tau)$. Then the following divergence formulae hold on $\Sigma^{2}$,
	\begin{itemize}
		\item[(a)] ${\rm div}(\nabla_{T}T)=K|T|^{2}+ T({\rm div}\, T)+|\nabla T|^{2}-4\tau^{2}C^{2}$.
		\item[(b)] ${\rm div}\left({\rm div}(T) T\right)= T({\rm div}\, T)+4H^{2}C^{2}.$
		\item[(c)] ${\rm div}\left(|T|\nabla|T|\right)=K|T|^{2}+ T({\rm div}\, T)+|\nabla T|^{2}-2\tau^{2}|T|^{2}-2\tau\langle \Phi(T),J(T)\rangle$.
	\end{itemize}
\end{lemma}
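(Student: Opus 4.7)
The overall strategy is to derive the three identities by combining the integrability equations~\eqref{eq:2.5} and the divergence formula~\eqref{eq:divT} with a Bochner-type calculation. Item~(b) is the easiest: the product rule gives ${\rm div}({\rm div}(T)\,T) = ({\rm div}\,T)^{2} + T({\rm div}\,T)$, and substituting ${\rm div}\,T = 2CH$ produces the desired $4H^{2}C^{2} + T({\rm div}\,T)$. I would dispose of this first and then focus on~(a), from which~(c) will be bootstrapped.

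For~(a), my plan is to apply the standard Bochner-type identity
$${\rm div}(\nabla_{T}T) = {\rm Ric}(T,T) + T({\rm div}\,T) + {\rm tr}\bigl((\nabla T)^{2}\bigr),$$
obtained by commuting covariant derivatives via $\overline{R}(e_{i},T)T = \nabla_{[e_{i},T]}T - \nabla_{e_{i}}\nabla_{T}T + \nabla_{T}\nabla_{e_{i}}T$ (consistent with the paper's convention) and tracing with an orthonormal frame normal at the base point. In dimension two, ${\rm Ric}(T,T) = K|T|^{2}$. To evaluate ${\rm tr}((\nabla T)^{2})$, the first equation in~\eqref{eq:2.5} identifies $\nabla T = C(A-\tau J)$ as an endomorphism of $T\Sigma$. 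Since $J^{2}=-I$, $A$ is symmetric and $J$ antisymmetric (so in particular ${\rm tr}(AJ)=0$), a short matrix calculation yields
$${\rm tr}\bigl((\nabla T)^{2}\bigr) = C^{2}(|A|^{2}-2\tau^{2}), \qquad |\nabla T|^{2} = C^{2}(|A|^{2}+2\tau^{2}),$$
so the discrepancy is exactly $4\tau^{2}C^{2}$, giving~(a).

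For~(c), the key observation is that $|T|\,\nabla|T| = \tfrac{1}{2}\nabla|T|^{2}$ has a clean expression in terms of $T$ itself. From~\eqref{eq:2.5} one computes $\tfrac{1}{2}\nabla|T|^{2} = C(A+\tau J)(T)$, and since $\nabla_{T}T = C(A-\tau J)(T)$, subtracting yields the algebraic identity
$$|T|\,\nabla|T| = \nabla_{T}T + 2\tau C\, J(T).$$
Taking the divergence and invoking~(a) together with the already-derived~\eqref{eq:div_JT1}, the remaining scalar terms $-4\tau^{2}C^{2} - 2\tau^{2}(1-3C^{2})$ collapse to $-2\tau^{2}|T|^{2}$ via $|T|^{2}=1-C^{2}$, and~(c) falls out.

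The main technical obstacle is the Bochner step in~(a): one has to track the paper's curvature-sign convention $\overline{R}(X,Y)Z = \nabla_{[X,Y]}Z - [\nabla_{X},\nabla_{Y}]Z$ carefully so that the curvature term really produces $+K|T|^{2}$, and one must work in a frame normal at the base point in order to kill the $\nabla_{T}e_{i}$ terms without losing the $T({\rm div}\,T)$ contribution that is supposed to survive.
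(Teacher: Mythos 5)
Your argument is correct, and for items (a) and (c) it takes a genuinely different route from the paper. The paper proves (a) by a direct extrinsic expansion: it writes $\nabla_TT=C(A-\tau J)(T)$, applies the product rule, evaluates ${\rm div}(A(T))$ via the Codazzi equation~\eqref{eq:2.11}, and only at the end reassembles the Gauss curvature $K$ through~\eqref{eq:2.12} and the characteristic-polynomial identity~\eqref{eq:2.29}. You instead invoke the intrinsic Bochner identity ${\rm div}(\nabla_TT)={\rm Ric}(T,T)+T({\rm div}\,T)+{\rm tr}\bigl((\nabla T)^{2}\bigr)$, so the term $K|T|^{2}$ appears automatically from commuting derivatives and neither the Codazzi nor the Gauss equation is needed; the only extrinsic input is the identification $\nabla T=C(A-\tau J)$ from~\eqref{eq:2.5} and the trace computation ${\rm tr}((\nabla T)^2)=C^2(|A|^2-2\tau^2)$ versus $|\nabla T|^2=C^2(|A|^2+2\tau^2)$, which isolates the defect $4\tau^2C^2$ transparently. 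This is cleaner and more conceptual, at the price of having to verify the sign of the curvature term against the paper's convention $R(X,Y)Z=\nabla_{[X,Y]}Z-[\nabla_X,\nabla_Y]Z$, which you correctly flag and which does produce $+K|T|^2$. For (c) the paper again computes directly from $|T|\nabla|T|=C(A+\tau J)(T)$, whereas you reduce to (a) via the algebraic relation $|T|\nabla|T|=\nabla_TT+2\tau C\,J(T)$ together with~\eqref{eq:div_JT1}; this is legitimate since~\eqref{eq:div_JT1} is established in Section~\ref{sec:4}, before the lemma, and the scalar terms do collapse to $-2\tau^2|T|^2$ as you claim. Item (b) is identical to the paper. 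One minor notational slip: in the commutation step you should write the intrinsic curvature tensor $R$ of $\Sigma^2$ rather than the ambient $\overline{R}$, since $\nabla_TT$ and the divergence are taken on the surface; this does not affect the argument.
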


\begin{proof}
	Firstly, let us observe that items $(a)$ and $(b)$ have already been proved in~\cite{Torralbo:10.1} (see also~\cite[Lemma 3.2]{Hu:15}). However, we will include the proofs for the sake of completeness.
	
	From the integrability equations~\eqref{eq:2.5} it is immediate to check that
	\begin{equation}\label{eq:div_nTT}
		{\rm div} (\nabla_TT)={\rm div}(C(A-\tau J)(T))=-\langle A^2(T),T\rangle +\tau^2|T|^2+C{\rm div}(A(T))-\tau C{\rm div}(J(T)).
	\end{equation}
	
	On the one hand, given a local orthonormal frame $\{e_1,e_2\}$ on $\mathfrak{X}(\Sigma)$ diagonalizing $A$, from the Codazzi equation~\eqref{eq:2.11} it holds
	\begin{equation}\label{eq:div_AT}
		\begin{split}
			{\rm div}(A(T))&=\sum_{i=1}^2\langle (\nabla_{e_i}A)(T),e_i\rangle+ \sum_{i=1}^2\langle A(\nabla_{e_i}T),e_i\rangle\\
			&=\,{\rm tr}(\nabla_TA)+C(\kappa-4\tau^2)|T|^2+\sum_{i=1}^2\langle \nabla_{e_i}T,A(e_i)\rangle\\
			&=\,2T(H)+C(\kappa-4\tau^2)|T|^2+C|A|^2,
		\end{split}
	\end{equation}
	where in the last equality we have used again~\eqref{eq:2.5} and the fact that the trace commutes with the Levi-Civita connection. 
	
	On the other hand,~\eqref{eq:divT} yields
	\begin{equation}\label{eq:TdivT}
		T({\rm div}(T))=-2H\langle A(T),T\rangle+2CT(H).
	\end{equation} 
	Then, taking into account~\eqref{eq:div_JT},~\eqref{eq:div_AT} and~\eqref{eq:TdivT},~\eqref{eq:div_nTT} reads
	\begin{equation}\label{eq:div_nTT_2}
		{\rm div}(\nabla_TT)=K|T|^2+T({\rm div}(T))+C^2|A|^2-2\tau^2C^2,
	\end{equation}
	where we have used~\eqref{eq:2.12} and~\eqref{eq:2.29}. Finally, item $(a)$ follows by observing that
	\begin{equation}\label{eq:norm_nT}
		|\nabla T|^2=\sum_{i,j=1}^2\langle\nabla_{e_i}T,e_j\rangle^2= C^2(|A|^2+2\tau^2),
	\end{equation}
	for any $\{e_1,e_2\}$ local orthonormal frame on $\mathfrak{X}(\Sigma)$.
	
	Item $(b)$ follows directly from~\eqref{eq:divT}.
	
	With respect to item $(c)$, a direct computation from~\eqref{eq:2.5} guarantees us that
	\begin{equation}\label{eq:3.1}
		|T|\nabla|T|=C(A+\tau J)(T).
	\end{equation}
	Then, taking divergences in~\eqref{eq:3.1},
	\begin{equation}\label{eq:3.2}
		{\rm div}\left(|T|\,\nabla| T|\right)={\rm div}(A(T))C+\tau{\rm div}(J(T))C+\langle\nabla C,(A+\tau J) (T)\rangle.
	\end{equation}
	It is easy to check from~\eqref{eq:2.29} and from the integrability equations~\eqref{eq:2.5} that
	\begin{equation}\label{eq:3.4}
		\begin{split}
			\langle\nabla C,(A+\tau J) (T)\rangle&=-\langle A^{2}(T),T\rangle-2\tau\langle A(T),J(T)\rangle-\tau^{2}|T|^{2}\\
			&=-2H\langle A(T),T\rangle+K_{e}|T|^{2}-2\tau\langle \Phi(T),J(T)\rangle-\tau^{2}|T|^{2}.
		\end{split}
	\end{equation}
	
	Then, item $(c)$ follows by inserting~\eqref{eq:div_AT}-\eqref{eq:norm_nT} and~\eqref{eq:3.4} in~\eqref{eq:3.2}.	
\end{proof}

Bringing all these formulae together we get the desired divergenge-type formulae,

\begin{corollary}\label{cor:3.1}
	Let $\Sigma^{2}$ be an isometrically immersed surface into an homogeneous space $\mathbb{E}^{3}(\kappa,\tau)$. Then the following divergence formulae
	hold,
	\begin{equation}\label{eq:divW}
		\begin{split}
			{\rm div}\,(\mathcal{U})&=\Delta K_{e}+|\nabla A|^{2}-4|\nabla H|^{2}+2|\Phi|^{2}\left(K_{e}+(\kappa-4\tau^{2})(4C^{2}-1)+\tau^{2}\right)\\
			&\quad-2(\kappa-4\tau^{2})\left(2H\langle\Phi( T), T\rangle+(K_{e}-\tau^{2})(1-3C^{2})\right),
		\end{split}
	\end{equation}
	where $\mathcal{U}=P(2\nabla H)+(\kappa-4\tau^{2})\left( \nabla_{ T} T-| T|\nabla| T|+{\rm div}\,( T) T \right)$
	and
	\begin{equation}
		\begin{split}\label{eq:divXTF2}
			{\rm div}\,(\mathcal{V})&=\Delta K_{e}+|\nabla A|^{2}-4|\nabla H|^{2}+2|\Phi|^{2}\left(K_{e}+3(\kappa-4\tau^{2})C^2+\tau^{2}\right)\\
			&\quad-2(\kappa-4\tau^{2})\left( |\nabla C|^2-2(K_e+\tau^2)C^2 \right),
		\end{split}
	\end{equation}
	where $\mathcal{V}=P(2\nabla H)+(\kappa-4\tau^2)\left(|T|\nabla |T|+{\rm div}(T)T-\nabla_TT\right)$.
\end{corollary}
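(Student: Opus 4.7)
The strategy is to view both identities as the sum of $\mathrm{div}(P(2\nabla H))$ (computed via Lemma~\ref{divergencias:CY} together with Proposition~\ref{prop:2.1}) and $(\kappa-4\tau^{2})$ times a suitable signed combination of the three divergence expressions from Lemma~\ref{divergencias}. The signs are chosen precisely so that the $T(H)$-term produced by Lemma~\ref{divergencias:CY} is absorbed and, in the case of $\mathcal{U}$, the $\langle\Phi(T),J(T)\rangle$ correction from Proposition~\ref{prop:2.1} is cancelled, while in the case of $\mathcal{V}$ the surviving $H\langle\Phi(T),T\rangle$ and $\tau\langle\Phi(T),J(T)\rangle$ terms are repackaged into $|\nabla C|^{2}$ via a Cayley--Hamilton identity.

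Concretely, Lemma~\ref{divergencias:CY} together with Proposition~\ref{prop:2.1} gives $\mathrm{div}(P(2\nabla H))=\Delta K_{e}+|\nabla A|^{2}-4|\nabla H|^{2}+|\Phi|^{2}[2K_{e}+(\kappa-4\tau^{2})(5C^{2}-1)+2\tau^{2}]-2(\kappa-4\tau^{2})[H\langle\Phi(T),T\rangle+\tau\langle\Phi(T),J(T)\rangle+CT(H)]$. For $\mathcal{U}$, I would compute $\mathrm{div}(\nabla_{T}T)-\mathrm{div}(|T|\nabla|T|)+\mathrm{div}(\mathrm{div}(T)T)$ from items (a)--(c) of Lemma~\ref{divergencias}; the $K|T|^{2}$ and $|\nabla T|^{2}$ terms cancel between (a) and (c), leaving $T(\mathrm{div}(T))+2\tau^{2}|T|^{2}-4\tau^{2}C^{2}+4H^{2}C^{2}+2\tau\langle\Phi(T),J(T)\rangle$. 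Substituting $T(\mathrm{div}(T))=-2H\langle\Phi(T),T\rangle-2H^{2}|T|^{2}+2CT(H)$ (a direct consequence of $\mathrm{div}(T)=2CH$ together with~\eqref{eq:2.5}) and using $|T|^{2}=1-C^{2}$, $2H^{2}=|\Phi|^{2}+2K_{e}$, one rewrites the combination as $(3C^{2}-1)(|\Phi|^{2}+2(K_{e}-\tau^{2}))+2\tau\langle\Phi(T),J(T)\rangle-2H\langle\Phi(T),T\rangle+2CT(H)$. After multiplying by $(\kappa-4\tau^{2})$ and adding, the $CT(H)$ and $\tau\langle\Phi(T),J(T)\rangle$ pieces cancel in pairs, the $|\Phi|^{2}$ coefficient combines to $2[K_{e}+(\kappa-4\tau^{2})(4C^{2}-1)+\tau^{2}]$, and the scalar remainder reduces to $-2(\kappa-4\tau^{2})(K_{e}-\tau^{2})(1-3C^{2})$, yielding the first identity.

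For $\mathcal{V}$, I would instead compute $(c)-(a)+(b)$; the same cancellations and substitutions give $-(|\Phi|^{2}+2(K_{e}+\tau^{2}))(1-3C^{2})-2[H\langle\Phi(T),T\rangle+\tau\langle\Phi(T),J(T)\rangle]+2CT(H)$. After multiplying by $(\kappa-4\tau^{2})$ and adding to $\mathrm{div}(P(2\nabla H))$, only the $CT(H)$ terms cancel; the accumulated coefficient of $H\langle\Phi(T),T\rangle+\tau\langle\Phi(T),J(T)\rangle$ is now $-4(\kappa-4\tau^{2})$. At this stage I would invoke the identity $|\nabla C|^{2}=2H\langle\Phi(T),T\rangle+2\tau\langle\Phi(T),J(T)\rangle+(|\Phi|^{2}+K_{e}+\tau^{2})|T|^{2}$, which follows by expanding $|\nabla C|^{2}=|(A+\tau J)(T)|^{2}$ using $\nabla C=-(A+\tau J)(T)$, the Cayley--Hamilton relation $A^{2}=2HA-K_{e}I$, $\langle T,J(T)\rangle=0$, and $|J(T)|=|T|$. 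This substitution converts the surviving linear terms into $-2(\kappa-4\tau^{2})|\nabla C|^{2}+2(\kappa-4\tau^{2})(|\Phi|^{2}+K_{e}+\tau^{2})(1-C^{2})$; regrouping the $|\Phi|^{2}$ and $(K_{e}+\tau^{2})$ contributions then gives the claimed coefficients $2[K_{e}+3(\kappa-4\tau^{2})C^{2}+\tau^{2}]$ and $4(\kappa-4\tau^{2})(K_{e}+\tau^{2})C^{2}$ respectively, which is the second identity.

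The main obstacle is purely bookkeeping: six families of terms ($|\Phi|^{2}$, $K_{e}\pm\tau^{2}$, $H\langle\Phi(T),T\rangle$, $\tau\langle\Phi(T),J(T)\rangle$, $CT(H)$ and, for $\mathcal{V}$, $|\nabla C|^{2}$) must be tracked in parallel. The two formulae differ only by a sign in the $\nabla_{T}T$ versus $|T|\nabla|T|$ combination, and this small change is what switches the cancellation mechanism: in $\mathcal{U}$ the cross terms vanish directly, while in $\mathcal{V}$ they accumulate and must be repackaged via the $|\nabla C|^{2}$ identity.
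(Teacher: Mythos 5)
Your proposal is correct and follows essentially the same route as the paper: both identities are obtained by combining $\mathrm{div}(P(2\nabla H))$ (via Lemma~\ref{divergencias:CY} and Proposition~\ref{prop:2.1}) with the signed combinations of items (a)--(c) of Lemma~\ref{divergencias}, using $T(\mathrm{div}\,T)=-2H\langle A(T),T\rangle+2CT(H)$ and, for $\mathcal{V}$, the identity~\eqref{eq:normnC2} for $|\nabla C|^2$. The only cosmetic difference is that the paper derives the second formula from the first by computing $\mathrm{div}(\mathcal{V}-\mathcal{U})$, whereas you recompute the combination $(c)-(a)+(b)$ directly; the cancellations and final coefficients you report all check out.
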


\begin{proof}
	On the one hand, let $\mathcal{U}_1=\nabla_{ T} T-| T|\nabla| T|+{\rm div}\,( T) T $, then from items $(a)$, $(b)$ and $(c)$ of Lemma~\ref{divergencias} we can compute
	\begin{equation}\label{eq:3.12}
		{\rm div}\,(\mathcal{U}_1)=-2\tau^{2}(3C^{2}-1)+2\tau\langle\Phi( T),J( T)\rangle+ T({\rm div}( T))+4H^{2}C^{2}.
	\end{equation}
	Then, from~\eqref{eq:3.12},~\eqref{eq:TdivT} and item $(d)$ in Lemma~\ref{divergencias} we get
	\begin{equation}\label{eq:divW_aux1}
		\begin{split}
			{\rm div}(\mathcal{U})=&\square(2H)-2H(\kappa-4\tau^{2})\left(\langle A( T), T\rangle-2C^{2}H\right)\\
			&-2\tau(\kappa-4\tau^{2})\left(\tau(3C^{2}-1)-\langle\Phi( T),J( T)\rangle\right).
		\end{split}
	\end{equation}
	Taking now into account Proposition~\ref{prop:2.1} jointly with~\eqref{eq:2.4} and the definition of $\Phi$ we easily deduce
	\begin{equation}\label{eq:divW_aux2}
		\begin{split}
			{\rm div}(\mathcal{U})&=\,\Delta K_{e}+|\nabla A|^{2}-4|\nabla H|^{2}+2|\Phi|^{2}\left(K_{e}+(\kappa-4\tau^{2})(4C^{2}-1)+\tau^{2}\right)\\
			&\quad+(\kappa-4\tau^{2})\left(-4H\langle\Phi( T), T\rangle+(1-3C^{2})(2\tau^2+|\Phi|^2-2H^2)\right).
		\end{split}
	\end{equation}
	Then~\eqref{eq:divW} follows from \eqref{eq:aux_normphi}.
	
	On the other hand, let us observe that 
	\begin{equation}\label{eq:difWV}
		\mathcal{V}-\mathcal{U}=2(\kappa-4\tau^2)(|T|\nabla |T|-\nabla_TT).
	\end{equation}
	Therefore, from~\eqref{eq:divW} and items $(a)$ and $(c)$ of Lemma~\ref{divergencias}, it holds
	\begin{equation}\label{eq:divWV}
		\begin{split}
			\textrm{div}(\mathcal{V})&=\Delta K_e +|\nabla A|^2-4|\nabla H|^2+2|\Phi|^2\left( K_e+(\kappa-4\tau^2)(4C^2-1)+\tau^2 \right)\\
			&\quad+2(\kappa-4\tau^2)\left(4\tau^2 C^2-2\tau^2|T|^2-2\tau\langle\Phi(T),J(T)\rangle-2H\langle \Phi(T),T \rangle-(K_e-\tau^2)(1-3C^2)\right).
		\end{split}
	\end{equation}
	Then, the desired formula~\eqref{eq:divXTF2} follows from~\eqref{eq:2.4} and~\eqref{eq:normnC2}, so Corollary~\ref{cor:3.1} is proved.
\end{proof}

In the next results we will approach the case in which the extrinsic curvature is constant and negative. For this, the following lemma is essential.

\begin{lemma}\label{lem:4.2}
	Let $\Sigma^{2}$ be an isometrically immersed orientable surface into the homogeneous space $\mathbb{E}^{3}(\kappa,\tau)$ with constant extrinsic curvature $K_{e}<0$. Then 
	\begin{equation}\label{reverse inequality}
		|\nabla A|^{2}\leq4|\nabla H|^{2}.
	\end{equation}
	In particular, the equality holds if and only if $\Sigma^{2}$ is a parallel surface.
\end{lemma}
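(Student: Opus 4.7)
The plan is to reduce the stated inequality to a pointwise non-negativity claim via the algebraic identity $|B|^{2}=(\mathrm{tr}\,B)^{2}-2\det B$ valid for every $2\times 2$ symmetric matrix $B$. Since $K_{e}<0$ forces the principal curvatures to satisfy $\lambda_{1}>0>\lambda_{2}$ at every point, $A$ has no umbilic points, so one can choose a smooth local orthonormal frame $\{e_{1},e_{2}\}$ diagonalising $A$, writing $A(e_{i})=\lambda_{i}e_{i}$ and $h_{ij,k}=\langle(\nabla_{e_{k}}A)(e_{i}),e_{j}\rangle$. Applying the identity to $B=\nabla_{e_{k}}A$, whose trace equals $2H_{,k}$ because trace commutes with $\nabla$, and then summing over $k$ gives
\[
|\nabla A|^{2}=4|\nabla H|^{2}-2\sum_{k=1}^{2}\det(\nabla_{e_{k}}A),
\]
so the desired inequality is equivalent to $\sum_{k}\det(\nabla_{e_{k}}A)\geq 0$.

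Next, I would exploit the constancy of $K_{e}=\lambda_{1}\lambda_{2}$: differentiating this relation and combining it with $h_{11,k}+h_{22,k}=2H_{,k}$ yields the explicit expressions
\[
h_{11,k}=\frac{2\lambda_{1}H_{,k}}{\lambda_{1}-\lambda_{2}},\qquad h_{22,k}=-\frac{2\lambda_{2}H_{,k}}{\lambda_{1}-\lambda_{2}},
\]
whence
\[
\sum_{k}h_{11,k}h_{22,k}=-\frac{4K_{e}|\nabla H|^{2}}{(\lambda_{1}-\lambda_{2})^{2}}>0.
\]
Since $\det(\nabla_{e_{k}}A)=h_{11,k}h_{22,k}-h_{12,k}^{2}$, the claim is reduced to the off-diagonal bound
\[
\sum_{k}h_{12,k}^{2}\leq -\frac{4K_{e}|\nabla H|^{2}}{(\lambda_{1}-\lambda_{2})^{2}}.
\]

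The Codazzi equation~\eqref{eq:2.11} in the diagonalising frame supplies
\[
h_{12,1}=h_{11,2}+(\kappa-4\tau^{2})CT_{2},\qquad h_{12,2}=h_{22,1}+(\kappa-4\tau^{2})CT_{1},
\]
and inserting the explicit formulae for $h_{11,2},h_{22,1}$ converts the desired bound into a pointwise algebraic inequality in the quantities $(H_{,1},H_{,2},CT_{1},CT_{2})$. The main obstacle will be to organise this residual quadratic expression as manifestly non-positive; I expect this to be possible by combining the integrability equations~\eqref{eq:2.5}, the identity $|\Phi|^{2}=2(H^{2}-K_{e})$, and the Cayley--Hamilton relation $A^{2}=2HA-K_{e}I$, so that the sign-indefinite cross terms involving $CT_{i}$ are absorbed either into a complete square or cancel against the leading terms produced by the factor $\lambda_{i}^{2}+K_{e}=2H\lambda_{i}$.

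Finally, tracing equality back through the chain of arguments gives $\det(\nabla_{e_{k}}A)=0$ for $k=1,2$; together with the explicit expressions for $h_{11,k}$, $h_{22,k}$ and $h_{12,k}$ this forces $\nabla A\equiv 0$, i.e.\ $\Sigma^{2}$ is parallel, which is the desired characterisation of the equality case.
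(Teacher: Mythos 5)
Your reduction via $|\nabla_{e_{k}}A|^{2}=(\mathrm{tr}\,\nabla_{e_{k}}A)^{2}-2\det(\nabla_{e_{k}}A)$ is sound, and its diagonal part coincides exactly with the paper's own argument: the formulae $h_{11,k}=2\lambda_{1}H_{,k}/(\lambda_{1}-\lambda_{2})$ and $h_{22,k}=-2\lambda_{2}H_{,k}/(\lambda_{1}-\lambda_{2})$ are just the paper's relations $e_{k}(\lambda_{1})\lambda_{2}+\lambda_{1}e_{k}(\lambda_{2})=0$ and $e_{k}(\lambda_{1})+e_{k}(\lambda_{2})=2H_{,k}$, and your identity $\sum_{k}h_{11,k}h_{22,k}=-4K_{e}|\nabla H|^{2}/(\lambda_{1}-\lambda_{2})^{2}\geq 0$ is the paper's conclusion $|\nabla A|^{2}-4|\nabla H|^{2}=\tfrac{2K_{e}}{\lambda_{2}^{2}}\sum_{k}e_{k}(\lambda_{2})^{2}\leq 0$ in disguise. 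The difference is that the paper's proof evaluates $|\nabla A|^{2}$ in the diagonalising frame as $\sum_{i,j}(e_{i}(\lambda_{j}))^{2}$, i.e.\ it retains only the diagonal components $h_{jj,k}$ of $\nabla A$, whereas your $\det(\nabla_{e_{k}}A)=h_{11,k}h_{22,k}-h_{12,k}^{2}$ correctly carries the off-diagonal contribution as well. The term you end up having to control is therefore precisely the term the paper's argument does not address.

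And that is where the genuine gap lies: the one step your scheme needs beyond the paper's computation, namely $\sum_{k}h_{12,k}^{2}\leq -4K_{e}|\nabla H|^{2}/(\lambda_{1}-\lambda_{2})^{2}$, is never proved; you only announce that you \emph{expect} the residual quadratic expression to be organisable as something non-positive. It cannot be, by the means you list. Cayley--Hamilton and $|\Phi|^{2}=2(H^{2}-K_{e})$ are zeroth-order identities in $A$ and impose no constraint on the first-order quantities $h_{12,k}$ beyond the Codazzi relations~\eqref{eq:2.11} that you have already substituted. After that substitution the required bound is a quadratic inequality in the independent data $(H_{,1},H_{,2},CT_{1},CT_{2})$ with no definite sign: at a critical point of $H$ it degenerates to $(\kappa-4\tau^{2})^{2}C^{2}|T|^{2}\leq 0$, while at a point with $CT=0$ it reads $2H\left(\lambda_{1}H_{,2}^{2}+\lambda_{2}H_{,1}^{2}\right)\leq 0$; neither holds automatically. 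So the off-diagonal term cannot be absorbed by pointwise algebra, and closing your argument would require genuine additional geometric input controlling $h_{12,k}$ (equivalently the connection coefficients $\langle\nabla_{e_{k}}e_{1},e_{2}\rangle$ weighted by $\lambda_{1}-\lambda_{2}$), which you do not supply. As a consequence the equality analysis at the end, which presumes the unproved inequality is available together with an identifiable equality case, is not justified either.
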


\begin{proof}
	Indeed, let $\{e_{1},e_{2}\}$ be a local orthonormal frame which diagonalizes $A$, that is, $A(e_i)=\lambda_ie_i$, $i=1,2$. Then $|\nabla A|^{2}=\sum_{i,j=1}^2 (e_i(\lambda_j))^2$, and by a direct computation we get
	\begin{equation}\label{eq:4.2}
		4|\nabla H|^{2}=(e_1(\lambda_{1})+e_1(\lambda_2))^{2}+(e_2(\lambda_{1})+e_2(\lambda_2))^{2}.
	\end{equation}
	Hence,
	\begin{equation}\label{eq:4.3}
		|\nabla A|^{2}-4|\nabla H|^{2}=-2(e_1(\lambda_1)e_1(\lambda_2)+e_2(\lambda_1)e_2(\lambda_2)).
	\end{equation}
	On the other hand, since $K_{e}=\lambda_{1}\lambda_{2}$ is a negative constant, taking derivatives with respect to $e_{1}$ and $e_{2}$,
	\begin{equation}\label{eq:deriv_Ke}
		0=e_i(K_e)=e_i(\lambda_1)\lambda_2+\lambda_1e_i(\lambda_2), \quad i=1,2.
	\end{equation}
	Furthermore, $\lambda_1,\lambda_2\neq 0$, so from~\eqref{eq:deriv_Ke} it holds
	\begin{equation}\label{eq:deriv_Ke_2}
		e_i(\lambda_1)=-\frac{\lambda_1}{\lambda_2}e_i(\lambda_2),\quad i=1,2.
	\end{equation}
	Therefore,~\eqref{eq:4.3} reads
	\begin{equation}\label{eq:4.6}
		|\nabla A|^{2}-4|\nabla H|^{2}=\dfrac{2\lambda_{1}}{\lambda_{2}}(e_1^2(\lambda_2)+e_2^2(\lambda_2))=\dfrac{2K_{e}}{\lambda_{2}^{2}}(e_1^2(\lambda_2)+e_2^2(\lambda_2))\leq 0
	\end{equation}
	as desired. The conclusion about the equality is immediate.
\end{proof}

\begin{corollary}\label{cor:4.1}
	There exists no immersed surface into the homogeneous space $\mathbb{E}^{3}(\kappa,\tau)$ with $\kappa-4\tau^2\neq0$, satisfying the equality in~\eqref{reverse inequality} and having positive constant extrinsic curvature.
\end{corollary}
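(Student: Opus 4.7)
The plan is to replay the proof of Lemma~\ref{lem:4.2} with the sign of $K_e$ reversed, and then invoke the classification of parallel surfaces. On the open subset $U\subset\Sigma^2$ where the principal curvatures $\lambda_1,\lambda_2$ of $A$ are distinct (the positivity of $K_e$ automatically rules out $\lambda_i=0$), I would choose a local orthonormal frame diagonalizing $A$. Running the computation of~\eqref{eq:4.3}--\eqref{eq:4.6} verbatim now yields
\begin{equation*}
    |\nabla A|^2-4|\nabla H|^2=\frac{2K_e}{\lambda_2^2}\bigl(e_1(\lambda_2)^2+e_2(\lambda_2)^2\bigr)\geq 0,
\end{equation*}
so the assumed equality in~\eqref{reverse inequality} forces $\lambda_2$, and hence $\lambda_1=K_e/\lambda_2$, to be locally constant on $U$.

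Next I would argue that $\Sigma^2$ must be a parallel surface. A standard boundary-continuity argument shows that either $U=\Sigma^2$ or $U=\emptyset$: were $W$ a proper connected component of $U$, the constants $\lambda_1,\lambda_2$ on $W$ would have to coincide at any boundary point of $W$ lying in the umbilic set $\Sigma^2\setminus U$, contradicting $\lambda_1\neq\lambda_2$ on $U$. If $U=\Sigma^2$, then $H$ is globally constant and the equality reduces to $|\nabla A|^2=0$. If instead $\Sigma^2$ is totally umbilical, i.e. $A=HI$, a direct computation gives $|\nabla A|^2=2|\nabla H|^2$, so the equality forces $\nabla H\equiv 0$, and once more $\nabla A\equiv 0$. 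In either case $\Sigma^2$ is parallel.

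Finally, since $\kappa-4\tau^2\neq 0$, Lemma~\ref{lem:4.1} identifies every parallel surface of $\mathbb{E}^3(\kappa,\tau)$ as either a piece of a Hopf cylinder, which by~\eqref{eq:Clifford} satisfies $K_e=-\tau^2\leq 0$, or (when $\tau=0$) a piece of a slice in $\mathbb{M}^2(\kappa)\times\mathbb{R}$, which is totally geodesic and so has $K_e=0$. Neither option is compatible with $K_e>0$, contradicting the hypothesis. I expect the only mildly delicate point to be the handling of the umbilic locus, but as above it is dispatched by the short continuity argument on the boundary of $U$.
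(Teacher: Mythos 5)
Your argument is correct and follows essentially the same route as the paper's proof: rerun the computation of Lemma~\ref{lem:4.2} with $K_e>0$ to get $|\nabla A|^{2}-4|\nabla H|^{2}\geq 0$, deduce from the assumed equality in~\eqref{reverse inequality} that both principal curvatures are constant and hence that $\Sigma^{2}$ is parallel, and then contradict Lemma~\ref{lem:4.1}, since Hopf cylinders and slices have $K_e=-\tau^{2}\leq 0$. The one genuine addition is your handling of the umbilic locus: the paper just says ``following the same reasoning as in the proof of Lemma~\ref{lem:4.2}'', but that reasoning requires a smooth diagonalizing frame, which is automatic when $K_e<0$ (there are no umbilic points) and not when $K_e>0$; your continuity argument on the boundary of the non-umbilic set $U$, together with the observation that a totally umbilical surface satisfying the equality must have $\nabla H=0$ and hence $\nabla A=0$, closes that small gap cleanly.
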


\begin{proof}
	Indeed, suppose there exists an immersed surface $\Sigma^{2}$ into $\mathbb{E}^{3}(\kappa,\tau)$, $\kappa-4\tau^2\neq 0$, satisfying the equality in~\eqref{reverse inequality} and having positive constant extrinsic curvature. Following the same reasoning as in the proof of Lemma~\ref{lem:4.2} we obtain~\eqref{eq:4.6}, so
	\begin{equation}\label{eq:4.6.1}
		0=|\nabla A|^{2}-4|\nabla H|^{2}=\dfrac{2K_{e}}{\lambda_{2}^{2}}(e_1^2(\lambda_2)+e_2^2(\lambda_2))\geq 0.
	\end{equation}
	Since $K_{e}>0$, we must have $e_1(\lambda_2)=e_2(\lambda_2)=0$. Therefore $\lambda_2$ is constant, so by the assumption on the extrinsic curvature $\lambda_1$ is also constant. Thus, $\Sigma^{2}$  should be a parallel surface of $\mathbb{E}^{3}(\kappa,\tau)$. Hence, from Lemma~\ref{lem:4.1} $\Sigma^{2}$ is either isometric to a piece of a Hopf cylinder or of a slice, which is a contradiction since in both cases $K_{e}=-\tau^{2}\leq 0$.
\end{proof}

Now, we present our first result related to surfaces with constant extrinsic curvature in $\mathbb{S}_b^3(\kappa,\tau)$.

\begin{theorem}\label{teo:1.1}
	Let $\Sigma^{2}$ be an isometrically immersed closed surface into the homogeneous space $\mathbb{E}^{3}(\kappa,\tau)$, $\kappa-4\tau^2\neq 0$, with negative constant extrinsic curvature. Then
	\begin{equation}\label{Simons}
		\int_{\Sigma}\!|\Phi|^{2}\!\left(K_{e}+(\kappa-4\tau^{2})(4C ^{2}-1)+\tau^{2}\right)dA\,\geq(\kappa-4\tau^{2})\!\int_{\Sigma}Q_{\tau,K_{e}}dA,
	\end{equation}
	where
	\begin{equation}\label{Q}
		Q_{\tau,K_{e}}=2H\langle\Phi( T), T\rangle+(K_{e}-\tau^{2})(1-3C^{2}).
	\end{equation}
	The equality holds if and only if $\mathbb{E}^3(\kappa,\tau)=\mathbb{S}_b^3(\kappa,\tau)$ and $\Sigma^{2}$ is a Hopf torus over a Riemannian circle in $\mathbb{S}^2(\kappa)$.
\end{theorem}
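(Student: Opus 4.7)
The plan is to combine the divergence formula in Corollary~\ref{cor:3.1} (specifically the identity for $\mathrm{div}(\mathcal{U})$) with the reverse Kato-type inequality of Lemma~\ref{lem:4.2}. Since $K_{e}$ is constant, $\Delta K_{e}\equiv 0$, and the formula~\eqref{eq:divW} becomes
\begin{equation*}
\mathrm{div}(\mathcal{U})=|\nabla A|^{2}-4|\nabla H|^{2}+2|\Phi|^{2}\!\left(K_{e}+(\kappa-4\tau^{2})(4C^{2}-1)+\tau^{2}\right)-2(\kappa-4\tau^{2})\,Q_{\tau,K_{e}}.
\end{equation*}
First I would integrate this identity over the closed surface $\Sigma^{2}$; by the divergence theorem the left-hand side vanishes, yielding
\begin{equation*}
2\!\int_{\Sigma}\!|\Phi|^{2}\!\left(K_{e}+(\kappa-4\tau^{2})(4C^{2}-1)+\tau^{2}\right)dA-2(\kappa-4\tau^{2})\!\int_{\Sigma}\!Q_{\tau,K_{e}}\,dA=\int_{\Sigma}\!\left(4|\nabla H|^{2}-|\nabla A|^{2}\right)dA.
\end{equation*}

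Next I would invoke Lemma~\ref{lem:4.2}: because $K_{e}<0$ is constant, $|\nabla A|^{2}\leq 4|\nabla H|^{2}$ pointwise on $\Sigma^{2}$, so the right-hand integral above is nonnegative. Rearranging gives exactly the inequality~\eqref{Simons}. This is the main calculation, and its only subtlety is checking that each term lands with the correct sign; the cleanness is guaranteed by~\eqref{eq:divW}, so I do not anticipate a real obstacle here.

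For the equality case, if equality holds in~\eqref{Simons} then $|\nabla A|^{2}=4|\nabla H|^{2}$ pointwise, which by Lemma~\ref{lem:4.2} forces $\Sigma^{2}$ to be a parallel surface. I would then apply the classification in Lemma~\ref{lem:4.1}. The key observation is that $K_{e}<0$ excludes every option except a Hopf cylinder with $\tau\neq 0$: slices of $\mathbb{M}^{2}(\kappa)\times\mathbb{R}$ have $K_{e}=0$, and Hopf cylinders in any $\mathbb{E}^{3}(\kappa,\tau)$ satisfy $K_{e}=-\tau^{2}$ (see~\eqref{eq:Clifford}), so $\tau=0$ is ruled out. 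Since $\Sigma^{2}$ is closed, the underlying curve in $\mathbb{M}^{2}(\kappa)$ must be a Riemannian circle, and $\pi^{-1}$ of a closed curve is a torus only when $\pi$ has closed fibers, i.e., precisely when $\mathbb{E}^{3}(\kappa,\tau)=\mathbb{S}^{3}_{b}(\kappa,\tau)$. Conversely, Hopf tori in $\mathbb{S}^{3}_{b}(\kappa,\tau)$ are parallel (hence satisfy $|\nabla A|=|\nabla H|=0$) and have constant $K_{e}=-\tau^{2}<0$, so the equality is automatic.

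The main potential obstacle I foresee is not the analytic step but the bookkeeping needed to verify that the pointwise identity coming from~\eqref{eq:divW} really does rearrange to precisely the left and right sides of~\eqref{Simons} with the stated form of $Q_{\tau,K_{e}}$; this is a matter of carefully matching the terms $(\kappa-4\tau^{2})(4C^{2}-1)+\tau^{2}$ and $(K_{e}-\tau^{2})(1-3C^{2})$ produced by Corollary~\ref{cor:3.1} against the statement, and then simply noting that $\int_{\Sigma}\mathrm{div}(\mathcal{U})\,dA=0$.
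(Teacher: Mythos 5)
Your proposal is correct and follows essentially the same route as the paper: both rest on integrating the identity~\eqref{eq:divW} of Corollary~\ref{cor:3.1} over the closed surface (with $\Delta K_e=0$), invoking Lemma~\ref{lem:4.2} to control $|\nabla A|^{2}-4|\nabla H|^{2}$, and then using Lemmas~\ref{lem:4.2} and~\ref{lem:4.1} together with $K_e<0$ and closedness to settle the equality case. The only cosmetic difference is that the paper applies the pointwise estimate to ${\rm div}(\mathcal{U})$ before integrating, whereas you integrate first; the two orders are equivalent.
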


\begin{proof}
	By Corollary~\ref{cor:3.1},
	\begin{equation}\label{eq:divW_thm1}
		\begin{split}
			{\rm div}\,(\mathcal{U})&=\,\Delta K_{e}+|\nabla A|^{2}-4|\nabla H|^{2}+2|\Phi|^{2}\left(K_{e}+(\kappa-4\tau^{2})(4C^{2}-1)+\tau^{2}\right)\\
			&\quad-2(\kappa-4\tau^{2})\left(2H\langle\Phi( T), T\rangle+(K_{e}-\tau^{2})(1-3C^{2})\right).
		\end{split}
	\end{equation}
	Since we are supposing that the extrinsic curvature is a negative constant, from Lemma~\ref{lem:4.2}, we can estimate the divergence in this way
	\begin{equation}\label{eq:divW_thm2}
		\begin{split}
			{\rm div}\,(\mathcal{U})&\leq2|\Phi|^{2}\left(K_{e}+(\kappa-4\tau^{2})(4C^{2}-1)+\tau^{2}\right)\\
			&\quad-2(\kappa-4\tau^{2})\left(2H\langle\Phi( T), T\rangle+(K_{e}-\tau^{2})(1-3C^{2})\right).
		\end{split}
	\end{equation}
	Therefore, taking integrals and using the classical divergence theorem, we have
	\begin{equation}\label{eq:int_thm}
		\int_{\Sigma}\left\{|\Phi|^{2}\left(K_{e}+(\kappa-4\tau^{2})(4C^{2}-1)+\tau^{2}\right)-(\kappa-4\tau^{2})Q_{\tau,K_{e}}\right\}dA\geq0,
	\end{equation}
	where $Q_{\tau,K_{e}}$ is defined as in~\eqref{Q}, which is the desired inequality.
	
	Furthermore, the equality is satisfied if and only if the equality holds in~\eqref{reverse inequality}. Since $K_{e}<0$, Lemma~\ref{lem:4.2} guarantees that $\Sigma^{2}$ is a parallel surface in $\mathbb{E}^{3}(\kappa,\tau)$. Therefore, from Lemma~\ref{lem:4.1}, we conclude that $\Sigma^{2}$ is isometric to a piece of a Hopf cylinder or to a slice of $\mathbb{M}^2(\kappa)\times\mathbb{R}$ when $\tau=0$. Thus, by closedness and recalling that slices in $\mathbb{M}^2(\kappa)\times\mathbb{R}$ are totally geodesic surfaces, so consequently satisfy $K_e=0$, the equality in~\eqref{eq:int_thm} is only satified in the case where  $\mathbb{E}^3(\kappa,\tau)=\mathbb{S}_b^3(\kappa,\tau)$ and $\Sigma^2$ is isometric to a Hopf torus.
\end{proof}

We can also obtain the following alternative characterization result from~\eqref{eq:divXTF2}.

\begin{theorem}\label{teo:1.2}
	Let $\Sigma^{2}$ be an isometrically immersed closed surface with negative constant extrinsic curvature into the homogeneous space $\mathbb{E}^{3}(\kappa,\tau)$ such that $\kappa-4\tau^2>0$. Then
	\begin{equation}\label{Simons2}
		\int_{\Sigma}\left\{ \left(3(\kappa-4\tau^2)C^2+K_e+\tau^2\right)|\Phi|^2+2(\kappa-4\tau^2)(K_e+\tau^2)C^2\right\} dA\,\geq 0.
	\end{equation}
	The equality holds if and only if $\mathbb{E}^3(\kappa,\tau)=\mathbb{S}_b^3(\kappa,\tau)$ and $\Sigma^{2}$ is a Hopf torus over a Riemannian circle in $\mathbb{S}^2(\kappa)$.
\end{theorem}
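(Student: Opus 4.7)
The proof will closely mirror the argument for Theorem~\ref{teo:1.1}, but now exploiting the second divergence formula in Corollary~\ref{cor:3.1} rather than the first. The plan is to start from
\[
{\rm div}(\mathcal{V})=\Delta K_e+|\nabla A|^2-4|\nabla H|^2+2|\Phi|^2\bigl(K_e+3(\kappa-4\tau^2)C^2+\tau^2\bigr)-2(\kappa-4\tau^2)\bigl(|\nabla C|^2-2(K_e+\tau^2)C^2\bigr),
\]
and discard three nonpositive terms under the running hypotheses before applying the divergence theorem on the closed surface $\Sigma^2$.

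First, since $K_e$ is constant, $\Delta K_e \equiv 0$. Next, Lemma~\ref{lem:4.2} yields the reverse Kato-type inequality $|\nabla A|^2-4|\nabla H|^2\le 0$ precisely because $K_e<0$. Finally, the assumption $\kappa-4\tau^2>0$ makes $-2(\kappa-4\tau^2)|\nabla C|^2\le 0$. Combining these three observations gives the pointwise estimate
\[
{\rm div}(\mathcal{V})\le 2|\Phi|^2\bigl(K_e+3(\kappa-4\tau^2)C^2+\tau^2\bigr)+4(\kappa-4\tau^2)(K_e+\tau^2)C^2.
\]
Integrating over $\Sigma^2$ and invoking the divergence theorem (since $\Sigma^2$ has no boundary, the left-hand side vanishes) immediately produces the desired inequality~\eqref{Simons2} after dividing by $2$.

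For the equality case, I would argue that all three nonpositive terms must vanish identically. The vanishing of $|\nabla A|^2-4|\nabla H|^2$ forces $\Sigma^2$ to be a parallel surface by the equality clause of Lemma~\ref{lem:4.2}; the vanishing of $|\nabla C|^2$ forces $C$ to be constant, but this is already implied by the parallel condition in the cases that arise. Lemma~\ref{lem:4.1} then reduces $\Sigma^2$ to either a piece of a Hopf cylinder over a Riemannian circle or, when $\tau=0$, a slice of $\mathbb{M}^2(\kappa)\times\mathbb{R}$. Slices are totally geodesic so satisfy $K_e=0$, and Hopf cylinders satisfy $K_e=-\tau^2$; since we require $K_e<0$, only the Hopf cylinder case with $\tau\neq 0$ survives. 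Closedness then forces the fibers of $\pi$ to be compact, which is only possible in a Berger sphere $\mathbb{S}_b^3(\kappa,\tau)$, and $\Sigma^2$ must be a Hopf torus over a Riemannian circle in $\mathbb{S}^2(\kappa)$. Conversely, for such a Hopf torus $C\equiv 0$ and the surface is parallel, so all the discarded terms vanish and equality is realized, completing the characterization. The only step requiring any real care is checking this last converse and the exclusion of the $\tau=0$ slice case; the rest is a direct assembly of earlier lemmas.
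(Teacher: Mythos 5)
Your proposal is correct and follows essentially the same route as the paper: integrate the divergence formula~\eqref{eq:divXTF2} for $\mathcal{V}$ over the closed surface, discard $\Delta K_e=0$, the nonpositive term $|\nabla A|^2-4|\nabla H|^2$ from Lemma~\ref{lem:4.2}, and the nonpositive term $-2(\kappa-4\tau^2)|\nabla C|^2$, then handle equality via the parallel-surface classification exactly as in Theorem~\ref{teo:1.1}. The paper's own proof is just a two-line reference to these same ingredients, so your write-up is simply a fuller version of the intended argument.
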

\begin{proof}
	The proof of~\eqref{Simons2} follows immediately taking integrals in~\eqref{eq:divXTF2} and taking into account Lemma~\ref{lem:4.2}. The conclusion regarding the equality follows as in Theorem~\ref{teo:1.1}.
\end{proof}

\section*{Acknowledgements}
	The authors would like to heartily thank the referee for his/her valuable remarks and comments. The first author is partially supported by MICINN/FEDER project PGC2018-097046-B-I00, by the Regional Government of Andalusia ERDEF project PY20-01391 and Fundaci\'on S\'eneca project 19901/GERM/15, Spain. Her work is a result of the activity developed within the framework of the Program in Support of Excellence Groups of the Regi\'{o}n de Murcia, Spain, by Fundaci\'{o}n S\'{e}neca, Science and Technology Agency of the Regi\'{o}n de Murcia. The second author is also partially supported by CNPq, Brazil, under the grant 431976/2018-0.


\begin{thebibliography}{n}
	
	\bibitem{Abresch:04} U. Abresch and H. Rosenberg,
	{\em A Hopf differential for constant mean curvature surfaces in $\mathbb{S}^{2}\times\mathbb{R}$ and $\mathbb{H}^{2}\times\mathbb{R}$},
	Acta Math. {\bf 193} (2004), 141--174.
	
	\bibitem{Aledo:05} J.A. Aledo, L.J. Al\'{\i}as and A. Romero,
	{\em A new proof of Liebmann classical rigidity theorem for surfaces in space forms},
	Rocky Mountain J. Math. {\bf 35} (2005), 1811--1824.
	
	\bibitem{Alexandrov:56} A.D. Alexandrov, 
	{\em Uniqueness theorems for surfaces in the large I}, 
	Vestnik Leningrad Univ. {\bf 11} (1956), 5--17.
	
	\bibitem{Colares:97} J.L.M. Barbosa and A.G. Colares, 
	{\em Stability of hypersurfaces with constant $r$-mean curvature}, 
	Ann. Global Anal. Geom. {\bf 15} (1997), 277--297.
	
	\bibitem{do Carmo:88} J.L. Barbosa, M. do Carmo and J. Eschenburg, 
	{\em Stability of hypersurfaces of constant mean curvature in Riemannian manifolds}, 
	Math. Z. {\bf197}, (1988) 123--138.
	
	\bibitem{Dillen:02} M. Belkhelfa, F. Dillen and J. Inoguchi, 
	{\em Surfaces with parallel second fundamental form in Bianchi-Cartan-Vranceanu spaces}, 
	in: PDE’s, submanifolds and affine differential geometry, Banach Center Publ., Polish Acad. Sci. Inst. Math., Warsaw, {\bf57} (2002), 67--87.
	
	\bibitem{Bryant:84} R.L. Bryant,
	{\em A duality theorem for Willmore surfaces}, 
	J. Differential Geom. {\bf20} (1984), 23--53
	
	\bibitem{Cao:07} L. Cao and H.Li, 
	{\em $r$-minimal submanifolds in space forms}, 
	Ann. Global Anal. Geom. {\bf32} (2007), 311-341.
	
	\bibitem{Cheng-Yau:77} S.Y. Cheng and S.T. Yau,
	{\em Hypersurfaces with constant scalar curvature},
	Math. Ann. {\bf225} (1977), 195--204.
	
	\bibitem{Daniel:07} B. Daniel, 
	{\em Isometric immersions into $3$-dimensional homogeneous manifolds}, 
	Comment. Math. Helv. {\bf82} (2007), 87--131.
	
	\bibitem{dos Santos:18} F.R. dos Santos,
	{\em Rigidity of surfaces with constant extrinsic curvature in the Riemannian product spaces}, Bull. Braz. Math. Soc, New Series {\bf 52} (2021), 307--326.
	
	\bibitem{Galvez:08} J. G\'alvez, A. Mart\'{i}nez and P. Mira, 
	{\em The Bonnet problem for surfaces in homogeneous 3-manifolds}, 
	Comm. Anal. Geom. {\bf16} (2008), 907--935.
	
	\bibitem{Guo:04} Z. Guo,
	{\em Willmore submanifolds in the unit sphere}
	Collect. Math. {\bf55} (2004), 279-287
	
	\bibitem{Hilbert:01} D. Hilbert, 
	{\em \"{U}ber Fl\"{a}chen von konstanter Gau{\ss}scher Kr\"{u}mmung}, 
	Trans. Amer. Math. Soc. {\bf 2} (1901), 87--99. 
	
	\bibitem{Hopf:83} H. Hopf, 
	{\em Differential Geometry in the large}, 
	Lecture Notes in Math., 1000, Springer-Verlag, Berlin, 1983.
	
	\bibitem{Hu:15} Z. Hu, D. Lyu and J. Wang,
	{\em On rigidity phenomena of compact surfaces in homogeneous 3-manifolds},
	Proc. Amer. Math. Soc. {\bf143} (2015), 3097--3109.
	
	\bibitem{Huisken:84} G. Huisken, 
	{\em Flow by mean curvature of convex surfaces into spheres}, 
	J. Differential Geom. {\bf20} (1984), 237-266
	
	\bibitem{Liebmann:1899} H. Liebmann, 
	{\em Eine neue eigenschaft der kugel}, 
	Nach. Kgl. Ges. Wiss. G\"{o}ttingen, Math.-Phys. Klasse (1899), 44--55.
	
	\bibitem{Coda:12} F.C. Marques and A. Neves, 
	{\em Min-max theory and the Willmore conjecture}, 
	Ann. of Math. {\bf179} (2014), 683--782.
	
	\bibitem{Meeks:04} W.H. Meeks and H. Rosenberg,
	{\em Stable minimal surfaces in $M\times\mathbb{R}$}, 
	J. Differential Geom. {\bf68} (2004), 515--534.
	
	\bibitem{Montiel:91} S. Montiel and A. Ros,
	{\em Compact hypersurfaces: the Alexandrov theorem for higher order mean curvatures},
	in Differential geometry, Pitman Monogr. Surveys Pure Appl. Math., vol. 52, Longman Sci. Tech., Harlow, 1991, pp. 279--296.
	
	\bibitem{Nomizu:69} K. Nomizu and B. Smyth,
	{\em A formula of Simons' type and hypersurfaces with constant mean curvature},
	J. Differential Geom. {\bf3} (1969), 367--377.
	
	\bibitem{O'Neill:83} B. O'Neill,
	{\em Semi-Riemannian Geometry, with Applications to Relativity},
	New York: Academic Press (1983).
	
	\bibitem{Pampano:20} A. P\'ampano, {\em Critical tori for mean curvature energies in Killing submersions}, Nonlinear Anal. {\bf 200} (2020), 112092, 18 pp.
	
	\bibitem{Rosenberg:02} H. Rosenberg,
	{\em Minimal surfaces in $M^{2}\times\mathbb{R}$}, Illinois J. Math {\bf46} (2002), 1177--1195.
	
	\bibitem{Tribuzy:12} H. Rosenberg and R. Tribuzy,
	{\em Rigidity of convex surfaces in the homogeneous spaces},
	Bull. Sci. Math. {\bf136} (2012), 892--898.
	
	\bibitem{Toubiana:} R. Souam and E. Toubiana,
	{\em Totally umbilic surfaces in homogeneous $3$-manifolds},
	Comment. Math. Helv. {\bf84} (2009), 673--704.
	
	\bibitem{Thurston:97} W.M. Thurston,
	{\em Three-dimensional geometry and topology Vol. I},
	Princeton Mathematical Series, Vol. 35. Princeton University Press, 1997.
	
	\bibitem{Torralbo:10.1} F. Torralbo and F. Urbano,
	{\em On the Gauss curvature of closed surfaces in homogeneous 3-manifolds},
	Proc. Amer. Math. Soc. {\bf138} (2010), 2561--2567.
	
	\bibitem{Torralbo:12.1} F. Torralbo and F. Urbano,
	{\em Compact stable constant mean curvature surfaces in homogeneous 3-manifolds},
	Indiana Univ. Math. J. {\bf61} (2012), 1129--1156.
	
	\bibitem{Weiner:78} J.L. Weiner, 
	{\em On a problem of Chen, Willmore, et al}, 
	Indiana Univ. Math. J. {\bf27} (1978), 19--35.
	
\end{thebibliography}
\end{document}